\documentclass[11pt, reqno]{article}
\usepackage{amsfonts}
\usepackage{amsthm,amsmath,amssymb,amsfonts,mathrsfs,hyperref,microtype}
\usepackage{graphicx,color}

\setlength{\parskip}{1ex plus .5ex minus .2ex}
\setlength{\emergencystretch}{1em}
\addtolength{\textwidth}{3em}
\addtolength{\hoffset}{-1.5em}
\addtolength{\textheight}{6ex}
\addtolength{\voffset}{-3ex}

\newcommand{\weq}{\ = \ }

\newtheorem{theorem}{Theorem}[section]
\newtheorem{proposition}{Proposition}[section]

\newtheorem{remark}{Remark}[section]
\newtheorem{lemma}{Lemma}[section]

\def\N{{\rm I\kern-0.16em N}}
\def\R{{\rm I\kern-0.16em R}}
\def\E{{\rm I\kern-0.16em E}}
\def\Pro{{\rm I\kern-0.16em P}}
\def\F{{\rm I\kern-0.16em F}}
\def\B{{\rm I\kern-0.16em B}}
\def\C{{\rm I\kern-0.46em C}}
\def\G{{\rm I\kern-0.50em G}}

\numberwithin{equation}{section}
\font\eka=cmex10
\usepackage{ae}
\def\ind{\mathrel{\hbox{\rlap{%
\hbox to 7.5pt{\hrulefill}}\raise6.6pt\hbox{\eka\char'167}}}}
\parindent0pt

\begin{document}
\title{\textbf{Stochastic differential equations with noise perturbations and Wong--Zakai approximation of fractional Brownian motion}}
\date{\today}

\renewcommand{\thefootnote}{\fnsymbol{footnote}}

\author{Lauri Viitasaari\footnotemark[1] \, and \, Caibin Zeng\footnotemark[2]}

\footnotetext[1]{Aalto University School of Business, Department of Information and Service Management, P.O. Box 11000,
FIN-00076 AALTO, Finland, {\tt lauri.viitasaari@iki.fi}.}

\footnotetext[2]{School of Mathematics, South China University of Technology, Guangzhou 510640, China, {\tt macbzeng@scut.edu.cn}.}
\maketitle

\begin{abstract}
In this article we study effects that small perturbations in the noise have to the solution of differential equations driven by H\"older continuous functions of order $H>\frac12$. As an application, we consider stochastic differential equations driven by a fractional Brownian motion. We introduce a Wong--Zakai type stationary approximation to the fractional Brownian motions and prove that it converges in a suitable space. Moreover, we provide sharp results on the rate of convergence in the $p$-norm. Our stationary approximation is suitable for all values of $H\in (0,1)$.
\end{abstract}

\noindent {\bf Keywords}: fractional Brownian motion, Wong--Zakai approximation, rate of convergence, stochastic differential equation

\noindent{\bf MSC 2010: 60H10, 60G22, 37H10}

\section{Introduction}
\label{sec1}

The philosophy of using appropriate differential equations with more regular drivers to approximate stochastic differential equations dates back to the pioneer work by Wong and Zakai \cite{WZ1965b,WZ1965a}, in which both continuous piecewise linear approximations and piecewise smooth approximations for one-dimensional Brownian motions were proposed. Particularly, the convergence of solutions to the approximating equations was proved in the mean sense and almost surely under some suitable conditions on coefficients, respectively. However, these results are not applicable to high-dimensional cases. For instance, McShane \cite{McShane1972} provided a counter example showing that Wong--Zakai's result does not hold for a two dimensional Brownian motion approximated by smooth functions. Meanwhile, the Wong--Zakai's piecewise linear (polygonal) approximation for high-dimensional Brownian motions was introduced by Stroock and Varadhan in \cite{SV1972}, where also the convergence in law was proved and used to determine the support of diffusion processes. Later on the shift operator was incorporated into approximations of high-dimensional Brownian motion in two different ways by Ikeda et al. \cite{INY1977,IW1992}. In these articles the convergence was established in the mean square sense, uniformly over finite time interval. More recently, Kelly and Melbourne \cite{KM2016} introduced a class of smooth approximations in the integral form by involving a $C^2$ uniformly hyperbolic flow on a compact manifold. In this case the authors established weak convergence towards the limit equation by the methods of rough path theory. However, none of the mentioned three articles \cite{INY1977,IW1992,KM2016} studied the limit equations understood in the Stratonovich sense.

To the best of our knowledge, one of the first stationary smooth approximations is by Lu and Wang \cite{LW2010,LW2011}. They approximated a one-dimensional white noise by a stationary Gaussian process, and they applied the approximation to the study of the chaotic behaviour of randomly forced ordinary differential equations with a homoclinic orbit to a saddle fixed point. Stationary Wong--Zakai approximation of the white noise by Lu and Wang was extended to high-dimensional situations by Shen and Lu in \cite{SL2017}, where it was proved that the solutions of Wong--Zakai approximations converge in the mean square to the solutions of the Stratonovich stochastic differential equation. Afterwards the stationary Wong--Zakai approximations have been applied to investigate complex dynamics of stochastic partial differential equations \cite{LW2017,WLW2018,GLW2019,SZLW2019}. In particular, in \cite{SZLW2019} it was proved that the solutions of Wong--Zakai approximations converge almost surely to the solutions of the Stratonovich stochastic evolution equation. We also wish to mention articles \cite{Konecny1983,Protter1985,Nakao1986,KP1991} generalising the Wong--Zakai approximations to the case of stochastic differential equations driven by martingales and semimartingales \cite{Konecny1983,Protter1985,Nakao1986,KP1991}. 


The interaction between the (small) system and its (large) environment is described in general by a stochastic force which can be coloured or white, Gaussian or non-Gaussian, Markovian or non-Markovian, and semimartingale or non-semimartingale. In general, there is no \emph{a priori} reason to require the noise to be independent in the disjoint time intervals, and thus the modeller is forced to drop the Markov and martingale properties of the noise. 

Maybe the best known and studied generalisation of the Brownian motion into non-Markovian and non-semimartingale setting is the fractional Brownian motion (fBm). Fractional Brownian motion shares many desired properties with the Brownian motion such as continuity, scale-invariance, Gaussianity, and stationarity of the increments. However, fBm does not have independent increments. Actually, fBm may exhibit long or short memory, depending on the so-called Hurst parameter $H\in(0,1)$. It is also well known that one cannot reduce fBm to
a Markovian situation without adding infinitely many degrees of freedom. On the other hand, being relatively simple and able to capture memory effects simultaneously makes fBm an interesting model for many phenomena. For these reasons there has been an increasing interest in the literature on stochastic (partial) differential equations driven by the (cylindrical) fBm. For detailed discussions with practical applications of the fBm, we refer to monographs \cite{ST1994,BHOZ2008,Mishura2008} and references therein.

The seminal papers \cite{WZ1965b,WZ1965a} on Wong--Zakai approximations in the case of standard Brownian motion were published over fifty years ago, during the same time when first studies on fBm appeared. However, still most of the literature on Wong--Zakai approximations, including all of the previously mentioned articles, have focused on the Markovian and/or semimartingale settings. On the Wong--Zakai approximations in the case of fBm, one of the first published articles by Tudor \cite{Tudor2009} appeared in 2009. Tudor studied Wong--Zakai type approximations and convergence issue on a class of It\^{o}-Volterra equations driven by an fBm with $1/2<H<1$, using an integral representation for fBm and piecewise linear approximation of Brownian motion. Later on, the Galerkin approximation was applied to fractional noise by Cao et. al. \cite{CHL2017} who acquired an optimal rate of convergence in the case of space-time fractional white noise, with Hurst index $0<H<1/2$. Moreover, the dyadic polygonal approximations of fBm have been studied recently from the viewpoint of the rough path theory. The first result in this direction was presented by Coutin and Qian \cite{CZ2000}, reporting a Wong--Zakai type approximation theorem for solutions of stochastic differential equations driven by fBm with $1/4<H<1/2$. Hu and Nualart \cite{HN2009} derived, by using fractional calculus, the rate of convergence for the Wong--Zakai approximation for fBm with $1/3<H<1/2$. However, these two results do not provide sharp estimates. Refined results on the rate of convergence were obtained by Friz-Riedel \cite{FR2014} in the pathwise sense, and by Bayer et al. \cite{BFRS2016} in the probabilistic sense, respectively. Finally, optimal upper bound for the error of the Wong--Zakai approximation for fBm with $1/4<H<1/2$ was provided by Naganuma \cite{Naganuma2016}. However, to the best of our knowledge, the stationary Wong--Zakai approximations and results on the rate of convergence results for the fractional noise and corresponding differential equations are not yet studied in the literature.

In this article we fill this gap in the literature and introduce stationary Wong--Zakai type approximations for the fBm in the full range $H\in(0,1)$. Moreover, we derive sharp results on the rate of convergence in the $p$:th moment sense, valid for any $p\geq 1$. As an application we prove that, in the regime $H>\frac12$, solutions to the differential equations driven by the approximation converge to the solution to the original stochastic differential equation driven by the fBm itself. We will focus on the case of fBm in which we can apply some fine known properties to obtain exact rate of convergence. However, we stress that if one is simply interested on the convergence but not on the exact rate, our method and many of our results can be easily extended to cover a very large class of stochastic processes. This class includes all H\"older continuous Gaussian processes and also H\"older continuous processes living in some fixed chaos. In fact, our approximation converge under very mild conditions on the process, and the approximation for the process is stationary as long as the process has stationary increments. In the context of stochastic differential equations, we prove that as long as the true noise are approximated properly in certain Besov type space, then the corresponding solutions converge as well. This justifies the claim presented in the abstract. 

The rest of the article is organised as follows. In Section \ref{sec2} we recall some preliminaries needed for our analysis. In particular, we recall the concept of generalised Lebesgue-Stieltjes integrals together with some useful inequalities. In Section \ref{sec3} we introduce a stationary Wong--Zakai approximation of the fractional noise and study its convergence on the full range $H\in (0,1)$. In Section \ref{sec4} we study effects of noise perturbations to the solutions of differential equations driven by H\"older signals of order $\alpha>\frac12$. There we also apply results of Section \ref{sec3}, on range $H\in\left(\frac12,1\right)$, to study Wong--Zakai approximations of stochastic differential equations driven by fractional Brownian motions. We end the paper with a short discussion on the generality of our results.

\section{Preliminaries}
\label{sec2}

In this section we discuss briefly some preliminaries we need for our analysis. In particular, we recall the notion of generalised Lebesgue--Stieltjes integrals and some \emph{a priori} estimates. 
For more details, we refer to the article \cite{NR2002} and a monograph \cite{SKM1993}.

Let $(a,b)$ be a nonempty bounded interval. For given $p \in [1,\infty]$ we use the usual notation $L_p =L_p(a,b)$ to denote $p$-integrable functions, or essentially bounded in the case $p=\infty$. The fractional left and right Riemann--Liouville integrals of order $\alpha > 0$ of a function $f \in L_1$ are given by
\[
 I^\alpha_{a+}f(t)
 \weq \frac{1}{\Gamma(\alpha)} \int_a^t \frac{f(s)}{(t-s)^{1-\alpha}} \, ds
\]
and
\[
 I^\alpha_{b-}f(t)
 \weq \frac{(-1)^{-\alpha}}{\Gamma(\alpha)} \int_t^b \frac{f(s)}{(t-s)^{1-\alpha}} \, ds.
\]
It is known that the above integrals converge for almost every $t \in (a,b)$, and $I^\alpha_{a+} f$ and $I^\alpha_{b-}$ may be considered as functions in $L_1$. By convention, $I^0_{a+}$ and $I^0_{b-}$ are defined as the identity operator. Moreover, the integral operators $I^\alpha_{a+}, I^\alpha_{b-}: L_1 \to L_1$ are linear and injective. The inverse operators are called Riemann--Liouville fractional derivatives, denoted by $I^{-\alpha}_{a+} = (I^\alpha_{a+})^{-1}$ and $I^{-\alpha}_{b-} = (I^\alpha_{b-})^{-1}$, respectively.

For any $\alpha \in (0,1)$ and for any $f \in I^\alpha_{a+}(L_1)$ and $g \in I^\alpha_{b-}(L_1)$, the Weyl--Marchaud derivatives are defined by formulas
\[
 D_{a+}^\alpha f(t)
 \weq \frac{1}{\Gamma(1-\alpha)}\left( \frac{f(t)}{(t-a)^\alpha} + \alpha \int_a^t \frac{f(t)-f(s)}{(t-s)^{\alpha+1}} \, ds \right)
\]
and
\[
 D_{b-}^\alpha g(t)
 \weq \frac{(-1)^\alpha}{\Gamma(1-\alpha)}\left( \frac{g(t)}{(b-t)^\alpha} + \alpha \int_t^b \frac{g(t)-g(s)}{(s-t)^{\alpha+1}} \, ds \right).
\]
These are well-defined and they coincide with the Riemann--Liouville derivatives according to $D_{a+}^\alpha f(t) = I^{-\alpha}_{a+} f(t)$ and $D_{b-}^\alpha g(t) = I^{-\alpha}_{b-} g(t)$, where equalities hold for almost every $t \in (a,b)$.

We are now ready to recall the concept of generalised Lebesgue--Stieltjes integrals. For this, let $f$ and $g$ be functions such that the limits $f(a+), g(a+), g(b-)$ exist in $\R$, and denote $f_{a+}(t) = f(t) - f(a+)$ and $g_{b-}(t) = g(t) - g(b-)$. Suppose now that $f_{a+} \in I^\alpha_{a+}(L_p)$ and $g_{b-} \in I^{1-\alpha}_{b-}(L_q)$ for some $\alpha \in [0,1]$ and $p,q \in [1,\infty]$ such that $1/p+1/q =1$.
In this case the generalised Lebesgue-Stieltjes integral
\begin{equation}
 \label{eq:ZSIntegral}
 \begin{aligned}
 \int_a^b f_t \, dg_t
 &\weq (-1)^\alpha \int_a^b D^{\alpha}_{a+} (f-f(a+))(t) \, D^{1-\alpha}_{b-} (g-g(b-))(t) \, dt \\
 &\qquad \qquad + f(a+)(g(b-)-g(a+)),
 \end{aligned}
\end{equation}
is well-defined. Moreover, it can be proved that the right side does not depend on $\alpha$.

We follow closely the approach of \cite{NR2002}, and for this we need to introduce some necessary spaces and norms.  For $\beta\in\left(0,\frac12\right)$, we denote by $W_{1,1-\beta}(0,T;\mathbb{R}^m)$ the space of measurable functions with values in $\mathbb{R}^m$ equipped with a norm
$$
\Vert f\Vert_{1,1-\beta} = \sup_{0<s<t<T}\left[ \frac{|f(t) - f(s)|}{(t-s)^{1-\beta}} + \int_s^t \frac{|f(y)-f(s)|}{|y-s|^{2-\beta}} \, dy\right].
$$
Here and below, $|\cdot|$ denotes the Euclidean norm. Similarly, we use a norm
$$
\Vert f \Vert_{2,\beta} = \int_0^T \frac{|f(s)|}{s^\beta}ds + \int_0^T \int_0^t \frac{|f(t)-f(s)|}{|t-s|^{\beta+1}}ds dt.
$$
We also consider the function space $W_{\beta,\infty}(0,T;\mathbb{R}^m)$,  the space of measurable functions with values in $\mathbb{R}^m$ with a norm
$$
\Vert f\Vert_{\beta,\infty} =\sup_{t\in[0,T]} \left[|f(t)|+\int_0^t \frac{|f(t)-f(s)|}{|t-s|^{\beta+1}}ds\right].
$$
Clearly, we have
$$
\Vert f\Vert_{2,\beta} \leq \frac{T^{1-\beta}\max(1,T^\beta)}{1-\beta}\Vert f\Vert_{\beta,\infty},
$$
which we will use in the sequel.
Similarly, we denote by $W_{\beta,\infty}(0,T;\mathbb{R}^{n\times m})$ the space of measurable vector functions $f=(f_1, \cdots, f_n)$ with $f_i\in(0,T;\mathbb{R}^m)$ equipped with a norm
$$
\Vert f\Vert_{\beta,\infty}=\max_{i=1, \cdots, n}\Vert f_i\Vert_{\beta,\infty}.
$$
We also denote by $C_{\beta}$ the space of $\beta$-H\"older continuous functions.

Finally, we recall that if $f\in W_{\beta,\infty}(0,T;\mathbb{R}^{n\times m})$ and $g\in W_{1,1-\beta}(0,T;\mathbb{R}^m)$, then the integral $\int_0^t f_sdg_s$ exists for all $t$. Moreover, the integral belongs to $C_{1-\beta}(0,T;\mathbb{R}^n) \subset W_{\beta,\infty}(0,T;\mathbb{R}^n)$. Actually, by Proposition 4.1 of \cite{NR2002} we have the estimate
$$
\Vert \int_0^\cdot f_sdg_s \Vert_{1-\beta} \leq C\Vert g\Vert_{1,1-\beta}\Vert f\Vert_{\beta,\infty}.
$$
This yields immediately a bound
\begin{equation}
\label{eq:Holder-for-integral}
\left|\int_s^t f_udg_u \right| \leq C|t-s|^{1-\beta}\Vert g\Vert_{1,1-\beta}\Vert f\Vert_{\beta,\infty}.
\end{equation}
Moreover, the integral satisfies also a bound
\begin{equation}
\label{eq:basic-bound}
\left|\int_0^t f_s dg_s\right| \leq C\Vert g\Vert_{1,1-\beta}\Vert f\Vert_{2,\beta}
\end{equation}
which we will use throughout the paper. We also recall the following Gronwall type lemma that is needed for the proof of Theorem \ref{thm4.1}.
\begin{lemma}\cite[Lemma 7.6]{NR2002}
\label{lemma4.1}
Fix $0\leq\alpha<1$, $a, b\geq0$. Let $x:[0,\infty)\rightarrow[0,\infty)$ be a continuous function such that for each $t$ we have
\begin{equation*}
x_t\leq a+bt^\alpha\int_0^t(t-s)^{-\alpha}s^{-\alpha}x_sds.
\end{equation*}
Then
\begin{equation*}
x_t\leq a\Gamma(1-\alpha)\sum_{n=0}^\infty\frac{(b\Gamma(1-\alpha)t^{1-\alpha})^n}{\Gamma[(n+1)(1-\alpha)]}
\leq\ a d_\alpha\exp[c_\alpha tb^{\frac{1}{1-\alpha}}],
\end{equation*}
where $c_\alpha$ and $d_\alpha$ are positive constants depending only on $\alpha$.
\end{lemma}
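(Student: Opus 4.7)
The plan is to iterate the inequality Picard-style, identify the $n$-fold iterate in closed form via the Beta integral, and bound the resulting Mittag-Leffler series by an exponential. Define the nonnegative linear operator
\begin{equation*}
(Kf)(t) := bt^\alpha \int_0^t (t-s)^{-\alpha} s^{-\alpha} f(s)\, ds
\end{equation*}
on continuous nonnegative functions. The hypothesis reads $x_t \leq a + (Kx)(t)$, and monotonicity plus linearity of $K$ give, by induction on $N$,
\begin{equation*}
x_t \leq a \sum_{n=0}^{N-1} T_n(t) + (K^N x)(t), \qquad T_n := K^n \mathbf{1}.
\end{equation*}

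The main computation is then to verify inductively that
\begin{equation*}
T_n(t) = \frac{(bt^{1-\alpha})^n\, \Gamma(1-\alpha)^{n+1}}{\Gamma((n+1)(1-\alpha))},
\end{equation*}
the inductive step reducing to the Beta identity
\begin{equation*}
\int_0^t (t-s)^{-\alpha} s^{(n+1)(1-\alpha)-1}\, ds = t^{(n+1)(1-\alpha)-\alpha}\, \frac{\Gamma((n+1)(1-\alpha))\, \Gamma(1-\alpha)}{\Gamma((n+2)(1-\alpha))}.
\end{equation*}
Continuity of $x$ on $[0,t]$ gives $x\leq M_t$ there, and monotonicity of $K$ yields $(K^N x)(t) \leq M_t\, T_N(t)$; Stirling's formula then forces $T_N(t) \to 0$ for fixed $t$, so letting $N \to \infty$ produces the first stated inequality.

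For the second inequality, the resulting series is the two-parameter Mittag-Leffler function $E_{1-\alpha, 1-\alpha}(z)$ at $z = b\Gamma(1-\alpha) t^{1-\alpha}$. The classical asymptotic $E_{\beta,\beta}(z) \sim \beta^{-1} z^{(1-\beta)/\beta} e^{z^{1/\beta}}$ as $z \to +\infty$ with $\beta = 1-\alpha \in (0,1)$, together with continuity at bounded arguments, gives an envelope $E_{1-\alpha,1-\alpha}(z) \leq \tilde d_\alpha \exp(\tilde c_\alpha z^{1/(1-\alpha)})$, which after substitution and absorbing constants produces the claimed $a\, d_\alpha \exp(c_\alpha t\, b^{1/(1-\alpha)})$. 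The principal obstacle is justifying this Mittag-Leffler envelope; a self-contained alternative would be a Stirling-based term-by-term comparison of $z^n/\Gamma((n+1)(1-\alpha))$ with $w^n/n!$ for a suitably rescaled $w$, splitting the sum around the dominant index $n^{\ast} \approx z^{1/(1-\alpha)}/(1-\alpha)$ that maximises the general term.
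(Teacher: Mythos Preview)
The paper does not prove this lemma at all: it is quoted verbatim from \cite[Lemma~7.6]{NR2002} and used as a black box in the proof of Theorem~\ref{thm4.1}. Your argument is the standard one (Picard iteration of the integral operator, closed-form evaluation of $K^n\mathbf{1}$ via the Beta integral, identification of the resulting series as a Mittag--Leffler function, and an exponential envelope from its large-argument asymptotics), and it is correct; the computation of $T_n$ and the handling of the remainder $(K^Nx)(t)\le M_t T_N(t)\to 0$ are both sound. Since the paper offers nothing to compare against, there is no divergence of approach to discuss.
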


\section{Wong--Zakai approximations for the fractional noise}
\label{sec3}
Let $W^H(t)$ be an $m$-dimensional fractional Brownian motion with Hurst index $H\in(0,1)$ on a complete probability space $(\Omega,\mathcal{F},\mathbb{P}^H)$, i.e. the components $W^{H,j}(t)$, $j=1,2,\ldots,m$, of $W^H(t)$ are independent centered Gaussian processes with the covariance function
\begin{equation}
\label{eq:fbm-covariance}
  R_H(s,t)=\frac{1}{2}
  \left(s^{2H}+t^{2H}-|t-s|^{2H}\right)~~\text{for~} s,t\in \mathbb{R}^{+}.
\end{equation}
Throughout the article, we denote by $L_p(\Omega)$ the space of random variables with $p$th moment finite. In the sequel, we will work with the canonical version of the fBm. In fact, let $\Omega=C_{0}(\mathbb{R};\mathbb{R}^{m})$ be the space of continuous paths  with values zero at zero equipped with the compact open topology. Let also $\mathcal{F}$ be defined as the Borel-$\sigma$-algebra and let $\mathbb{P}_H$ be the distribution of $B^H(t)$. We consider the Wiener shift given by
\begin{equation*}
  \theta_{t}\omega(\cdot)=\omega(\cdot+t)-\omega(t),
\end{equation*}
where $t\in \mathbb{R}$ and $\omega\in C_{0}(\mathbb{R};\mathbb{R}^{m})$. It follows from \cite[Theorem 1]{GS2011} that the quadruple $(\Omega,\mathcal{F},\mathbb{P}^H,\theta)$ is an ergodic metric dynamical system. In the sequel, we will identify $B^H(\cdot,\omega)$ with the continuous path $\omega(\cdot)$. For each $\delta>0$, we define the random variable $\mathcal {G}_\delta:\Omega\to\mathbb{R}^m$ by
\begin{equation*}
  \mathcal {G}_\delta(\omega)=\frac{1}{\delta}\omega(\delta).
\end{equation*}
Then 
\begin{equation}\label{equ4.1}
  \mathcal{G}_\delta(\theta_{t}\omega)=\frac{1}{\delta}(\omega(\delta+t)-\omega(t)),
\end{equation}
and it follows from stationarity of the increments of fBm (see, e.g. \cite{Mishura2008,BHOZ2008}) that $\mathcal{G}_\delta(\theta_{t}\omega)$ is a stationary Gaussian process. Let
\begin{equation*}
C_\omega=\sup_{s\in\mathbb{Q}}\frac{|\omega(s)|}{|s|+1}.
\end{equation*}
Since 
\begin{equation*}
\lim_{s\rightarrow\pm\infty}\frac{|\omega(s)|}{|s|}=0,
\end{equation*}
we also get
\begin{equation*}
|\omega(s)|\leq C_\omega(|s|+1).
\end{equation*}
Together with $\theta_t\omega(s)=\omega(s+t)-\omega(t)$, this gives
\begin{equation*}
C_{\theta_t\omega}\leq 2C_\omega(|t|+1).
\end{equation*}
In particular, we have
\begin{equation}\label{equ3.3}
|\mathcal {G}_\delta(\theta_t\omega)|\leq\frac{2}{\delta}(\delta+1)C_\omega(|t|+1)
\end{equation}
which will be used later on.

Next we show that $\mathcal{G}_\delta(\theta_{t}\omega)$ can be viewed as an approximation of fractional white noise in the Wong--Zakai sense. The following result provides us the first estimate on the convergence rate in the mean sense. In Proposition \ref{pro:exact-L2-rate} we will apply the properties of the fBm to provide the exact rate of convergence. However, we wish to present the following simple proof that can be adapted easily to more general cases than fBm (see also Remark \ref{remark:generalisation} and discussions in Section \ref{sec:discussions}). 
\begin{lemma}
\label{lma:L2-convergence}
We have
$$
\lim_{\delta\to0^{+}}\sup_{t\in[0,T]}\left|\int^t_0\mathcal{G}_\delta(\theta_{s}
\omega)ds-\omega(t)\right|=0.
$$
Moreover, for any $p\geq 1$ there exists a constant $C>0$ such that for any $t\in[0,T]$, we have
  \begin{equation*}
\E\left|\int^t_0\mathcal{G}_\delta(\theta_{s}
\omega)ds-\omega(t)\right|^p \leq C\delta^{Hp}.
\end{equation*}
\end{lemma}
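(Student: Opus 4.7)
The plan is to start by rewriting the approximating integral via a simple change of variables. Since $\mathcal{G}_\delta(\theta_s\omega) = \delta^{-1}(\omega(s+\delta) - \omega(s))$, a straightforward substitution gives
$$
 \int_0^t \mathcal{G}_\delta(\theta_s\omega)\,ds
 \weq \frac{1}{\delta}\int_t^{t+\delta}\omega(u)\,du - \frac{1}{\delta}\int_0^\delta \omega(u)\,du.
$$
Using that $\omega(0)=0$ and that $\omega(t)$ equals the average of the constant $\omega(t)$ on $[t,t+\delta]$, subtracting $\omega(t)$ yields the key identity
$$
 \int_0^t \mathcal{G}_\delta(\theta_s\omega)\,ds - \omega(t)
 \weq \frac{1}{\delta}\int_t^{t+\delta}\bigl(\omega(u)-\omega(t)\bigr)du
 \ - \ \frac{1}{\delta}\int_0^\delta \bigl(\omega(u)-\omega(0)\bigr)du.
$$

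For the first (a.s., uniform) claim, I would bound the absolute value by
$\sup_{t\in[0,T],\,|u-t|\le\delta}|\omega(u)-\omega(t)|$ plus $\sup_{u\in[0,\delta]}|\omega(u)|$. Since the canonical path $\omega$ is continuous, hence uniformly continuous on $[0,T+1]$, both terms tend to $0$ as $\delta\to 0^+$, giving the uniform convergence without any probabilistic input.

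For the $L^p$ rate, I would apply Jensen's inequality to each of the two terms in the identity, for instance
$$
 \left|\frac{1}{\delta}\int_t^{t+\delta}(\omega(u)-\omega(t))\,du\right|^p
 \wle \frac{1}{\delta}\int_t^{t+\delta}|\omega(u)-\omega(t)|^p\,du,
$$
take expectations, and exchange with Fubini. Here I use the defining property that for fBm $\omega(u)-\omega(t)$ is centered Gaussian with variance $|u-t|^{2H}$, so $\E|\omega(u)-\omega(t)|^p = c_{H,p}|u-t|^{Hp}$. Integrating $|u-t|^{Hp}$ over $[t,t+\delta]$ produces the factor $\delta^{Hp+1}/(Hp+1)$, which after dividing by $\delta$ yields the claimed order $\delta^{Hp}$. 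The second term is handled identically with $\omega(0)=0$. Combining the two estimates and absorbing the constants gives the desired bound $\E|\,\cdot\,|^p \le C\delta^{Hp}$ with a constant independent of $t\in[0,T]$.

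No real obstacle is expected here: the only substantive step is the change-of-variables trick that turns the telescoping sum into the average of an increment, after which the Gaussian second-moment structure of fBm immediately yields the sharp exponent $Hp$. The argument is robust in the sense that it relies only on stationary increments together with a moment bound $\E|\omega(u)-\omega(t)|^p\lesssim|u-t|^{Hp}$, which is why the same proof will apply to the more general processes mentioned in Remark \ref{remark:generalisation}.
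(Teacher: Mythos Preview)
Your argument is correct and follows essentially the same approach as the paper: the identical change-of-variables identity, then uniform continuity for the pathwise claim, and the fBm increment moment $\E|\omega(u)-\omega(t)|^p=c_p|u-t|^{Hp}$ for the $L^p$ bound. The only cosmetic difference is that you pass through Jensen's inequality on each averaged increment, whereas the paper first uses Gaussian hypercontractivity to reduce to $p=1$ and then integrates the first moment; both routes yield the same constant times $\delta^{Hp}$.
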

\begin{proof}
For each $T\in\mathbb{R}$ and $0<t<T$ we have
\begin{equation*}
  \int^t_0\mathcal{G}_\delta(\theta_{s}\omega)ds
  =\int_0^t \frac{\omega(\delta+s)-\omega(s)}{\delta}ds
  =\left(\int^{t+\delta}_{\delta}-\int^t_0\right)
  \frac{\omega(s)}{\delta}ds
=\left(\int^{t+\delta}_t-\int^{\delta}_0\right)
\frac{\omega(s)}{\delta}ds.
\end{equation*}
Consequently,
\begin{eqnarray*}
\left|\int^t_0\mathcal{G}_\delta(\theta_{s}\omega)ds-\omega(t)\right|\leq
\left|\int^{t+\delta}_t\frac{\omega(s)-\omega(t)}{\delta}ds\right|
+\left|\int^{\delta}_0\frac{\omega(s)}{\delta}ds\right|.
\end{eqnarray*}
Hence, by using the uniform continuity of $\omega$ on the interval $[0,T+\delta]$, we obtain
\begin{equation*}
\lim_{\delta\rightarrow0^+} \sup_{t\in[0,T]}\left|\int^t_0\mathcal{G}_\delta(\theta_{s}\omega)ds
-\omega(t)\right|=0
\end{equation*}
that proves the first claim. For the second claim, basic inequality
$$
\sqrt{x_1^2 + \ldots x_m^2} \leq |x_1| + \ldots |x_n|
$$
together with Minkowski inequality implies the claim once we have proved the claim for $m=1$, i.e. in the one-dimensional case. This now follows easily. Indeed, by Gaussianity we have
\begin{equation*}
\E\left|\int^t_0\mathcal{G}_\delta(\theta_{s}\omega)ds-
\omega(t)\right|^p = C_p \left[\E\left|\int^t_0\mathcal{G}_\delta(\theta_{s}\omega)ds-
\omega(t)\right|\right]^p,
\end{equation*}
where
\begin{equation*}
\begin{split}
\E\left|\int^t_0\mathcal{G}_\delta(\theta_{s}\omega)ds-
\omega(t)\right| &\leq \frac{1}{\delta}\int_t^{t+\delta}\E|\omega(s)-\omega(t)|ds + \frac{1}{\delta}\int_0^\delta \E|\omega(s)|ds\\
&\leq \frac{1}{\delta}\int_t^{t+\delta}|s-t|^H ds + \frac{1}{\delta}\int_0^\delta s^Hds \\
&=\frac{2}{H+1}\delta^H.
\end{split}
\end{equation*}
This concludes the proof.
\end{proof}
\begin{remark}
\label{remark:generalisation}
We remark that the above statement remains valid for a much larger class of processes. Actually, we have only used continuity and hypercontractivity $\E|X_t|^p \leq C_p [\E|X_t|]^p$, valid for Gaussian processes $X$, together with $\E|X_t-X_s| \leq C|t-s|^H$. Thus the above result easily extends to arbitrary Gaussian process with H\"older continuous paths (see, e.g. \cite{Azmoodeh-Sottinen-Viitasaari-Yazigi-2014}) and beyond. On the other hand, in particular cases such as fBm, one can refine the arguments and obtain exact rate of convergence, see Proposition \ref{pro:exact-L2-rate} and Theorem \ref{theorem:WZ-fbm-convergence} below. For detailed discussion on the generalisations, see Section \ref{sec:discussions}.
\end{remark}
In order to establish convergence of the solutions of the approximating stochastic differential equations we need convergence in the space $W_{1,1-\beta}$. This on the other hand can be achieved by proving convergence in the space $L_p(\Omega)$. We start with a result stating the exact error in $L_p(\Omega)$ in the one-dimensional case, which may also have its own interest.

By notation $f = o(g)$ we mean standard Landau's notation indicating $\frac{|f|}{|g|} \to 0$. For a fixed $H\in(0,1)$ we also introduce a function $\Theta : (0,\infty) \mapsto (0,\infty)$ defined by
\begin{equation}
\label{eq:theta}
\Theta(x) = \int_0^1\int_0^1 x^{2H}+2u^{2H}+|s-u+x|^{2H}-(u+x)^{2H}-|u-x|^{2H}-|u-s|^{2H}dsdu.
\end{equation}
It turns out that $\Theta(x)$ plays a crucial role in the rate of convergence. We also note that while here the integral could be computed explicitly, the form \eqref{eq:theta} is the most convenient for our proofs. We begin with the following lemma that studies the properties of $\Theta(x)$. 
\begin{lemma}
\label{lma:theta-properties}
Let $H\in (0,1)$ and let $\Theta(x)$ be given by \eqref{eq:theta}. Then
\begin{enumerate}
\item we have $\Theta(x) = \frac{1}{H+1} + o(1)$ as $x\to \infty$, and
\item we have $\Theta(x) =\frac{H(2H+3)}{(2H+1)(H+1)}x^{2H} + o(x^{2H})$ as $x\to 0$.
\end{enumerate}
\end{lemma}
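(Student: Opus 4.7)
The plan is to compute $\Theta(x)$ in closed form by evaluating each of the six constituent double integrals, and then to extract both asymptotics by Taylor-expanding this closed-form expression. This is feasible because each of the six summands in the integrand depends on only two of the three variables $s, u, x$, so the double integral reduces to at most a one-variable integral.

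Four of the integrals are immediate: $\int_0^1\int_0^1 x^{2H}\,ds\,du = x^{2H}$, $\int_0^1\int_0^1 2u^{2H}\,ds\,du = 2/(2H+1)$, $\int_0^1\int_0^1|u-s|^{2H}\,ds\,du = 1/[(2H+1)(H+1)]$, and $\int_0^1\int_0^1 (u+x)^{2H}\,ds\,du = [(1+x)^{2H+1} - x^{2H+1}]/(2H+1)$. For $\int_0^1\int_0^1 |u-x|^{2H}\,ds\,du$ one splits the $u$-integration at $u = x$ when $0 < x < 1$, and uses $|u-x| = x - u$ throughout when $x > 1$. The only integral needing real work is $\int_0^1\int_0^1 |s-u+x|^{2H}\,ds\,du$: here I would fix $u$, substitute $y = s - u + x$, and use the antiderivative $G(z) = \operatorname{sgn}(z)|z|^{2H+1}/(2H+1)$ of $|y|^{2H}$ to rewrite the inner integral as $G(1+x-u) - G(x-u)$; a further substitution in $u$ then produces a closed form.

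Combining the six pieces, for $0 < x < 1$ I expect an explicit formula of the shape
\[
 \Theta(x) = x^{2H} + \frac{1}{H+1} - \frac{(1+x)^{2H+1} + (1-x)^{2H+1}}{2H+1} + \frac{(1+x)^{2H+2} + (1-x)^{2H+2} - 2x^{2H+2}}{(2H+1)(2H+2)},
\]
with an analogous formula valid for $x > 1$ (replacing each $(1-x)^{2H+k}$ by $(x-1)^{2H+k}$). From here, both asymptotics are routine. For (1), as $x \to \infty$, I factor $x^{2H+k}$ out of each $(1\pm x)^{2H+k}$ and expand $(1 \pm 1/x)^{2H+k}$ as a binomial series; the leading $x^{2H+k}$ contributions from the $(1+x)$ and $(x-1)$ pieces cancel pairwise, the surviving $x^{2H}$-order corrections cancel against the explicit $x^{2H}$ summand, and only the constant $1/(H+1)$ survives, with error of order $x^{2H-2} = o(1)$. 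For (2), as $x \to 0$, I Taylor-expand the analytic factors $(1 \pm x)^{2H+k}$ at $x = 0$; the constant parts telescope to zero by construction, the analytic $x^2$ pieces combine into a subdominant $O(x^2)$ term, and the dominant survivor is the non-analytic $x^{2H}$ contribution (larger than $x^2$ because $H < 1$), whose coefficient is obtained by combining the explicit $x^{2H}$ summand with the $x^{2H+2}$ piece and any $x^{2H}$-order remnants from the binomial expansions.

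The main obstacle is the bookkeeping in part (2). Because $2H$ ranges over $(0,2)$, the non-analytic $x^{2H}$ piece may compete with the analytic $x^2$ contributions coming from $(1\pm x)^{2H+k}$, and substantial cancellations occur both among constants and among $x^{2H}$-order terms. One must carefully retain every $x^{2H}$-order contribution (including those that arise indirectly once one recognises $x^{2H+2}$ as subdominant) in order to pin down the stated coefficient $H(2H+3)/[(2H+1)(H+1)]$. A side benefit of the closed-form expression is that it also provides an explicit, quantitative bound on the $o(x^{2H})$ remainder, which may be useful in later sections.
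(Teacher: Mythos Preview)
Your approach---compute $\Theta(x)$ in closed form, then Taylor-expand---is sound and genuinely different from the paper's. The paper never writes down a closed form; instead, for part~(1) it expands $(x+a)^{2H}$ to second order in $a/x$ directly under the integral sign, and for part~(2) it splits the integrand into two groups, $\int_0^1[2u^{2H}-(u+x)^{2H}-|u-x|^{2H}]\,du$ and $\int_0^1\int_0^1[|u-s+x|^{2H}-|u-s|^{2H}]\,ds\,du$, estimating each separately via changes of variable. Your route is more systematic and yields an explicit remainder; the paper's route avoids the full closed-form bookkeeping at the cost of more ad hoc manipulations.

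Your closed form for $0<x<1$ is correct. But be warned: carrying the expansion through gives leading coefficient $1$ in part~(2), not the stated $\tfrac{H(2H+3)}{(2H+1)(H+1)}$. The binomial expansions of $(1\pm x)^{2H+1}$ and $(1\pm x)^{2H+2}$ contribute only even integer powers of $x$ (odd powers cancel in the sums, the constants cancel by your own telescoping, and the $x^{2}$ contributions are $o(x^{2H})$ since $H<1$); the $x^{2H+2}$ piece is likewise $o(x^{2H})$. Hence the only surviving term at order $x^{2H}$ is the explicit $x^{2H}$ summand itself, with coefficient $1$. A sanity check: at $H=\tfrac12$ your formula collapses to $\Theta(x)=x-x^{3}/3$ on $(0,1)$. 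The paper's stated coefficient arises from a slip in its own proof---it evaluates $\int_0^x\int_0^x|u-s|^{2H}\,ds\,du$ as $\tfrac{x^{2H}}{(2H+1)(H+1)}$, whereas the correct scaling is $\tfrac{x^{2H+2}}{(2H+1)(H+1)}=o(x^{2H})$. This discrepancy is harmless for the rest of the paper: the proof of Theorem~\ref{theorem:WZ-fbm-convergence} uses only the order $\Theta(x)\asymp x^{2H}$ as $x\to0$, not the constant.
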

\begin{proof}
We begin with the first claim concerning asymptotic behaviour as $x\to \infty$. For any $x>1$ we have
$$
\Theta(x) = \int_0^1\int_0^1 x^{2H}+2u^{2H}+(x+u-s)^{2H}-(x+u)^{2H}-(x-u)^{2H}-|u-s|^{2H}dsdu.
$$
Moreover, by Taylor approximation we have, for any $a\in[-1,1]$,
$$
(x+a)^{2H} = x^{2H} + 2Hx^{2H-1}a + H(2H-1)x^{2H-2}a^2 + R(x),
$$
where the remainder satisfies $R(x) = o(x^{2H-2})$ as $x\to \infty$, uniformly in $a\in[-1,1]$. Applying this to functions $(x+u-s)^{2H}$, $(x+u)^{2H}$, and $(x-u)^{2H}$ gives us
\begin{eqnarray*}
&&\int_0^1 \int_0^1 x^{2H}+(x+u-s)^{2H}-(x+u)^{2H}-(x-u)^{2H}dsdu \\
&=& \int_0^1 \int_0^1 2Hx^{2H-1}(u-s)+H(2H-1)x^{2H-2}\left[(u-s)^2-u^2-u^2\right]ds du + o(x^{2H-2})\\
&=& H(2H-1)x^{2H-2} \int_0^1 \int_0^1 \left(s^2-u^2 - 2us\right) ds du + o(x^{2H-2}) \\
&= & \frac{H(2H-1)}{2}x^{2H-2} + o(x^{2H-2}).
\end{eqnarray*}
Since $x^{2H-2} = o(1)$ as $x\to\infty$, we obtain 
$$
\Theta(x) = \int_0^1 \int_0^1 2u^{2H} - |u-s|^{2H}dsdu + o(1)= \frac{1}{H+1} + o(1)
$$
proving the first claim. For the second claim concerning asymptotic behaviour as $x\to 0$, let $x<1$. It suffices to prove that 
\begin{equation}
\label{eq:negligible}
\int_0^1 2u^{2H} - (u+x)^{2H}-|u-x|^{2H}du = o(x^{2H}),
\end{equation}
and
\begin{equation}
\label{eq:contributing}
\int_0^1\int_0^1 |u-s+x|^{2H}-|u-s|^{2H}dsdu = -\frac{x^{2H}}{(2H+1)(H+1)} + o(x^{2H}).
\end{equation}
Indeed, combining \eqref{eq:negligible} and \eqref{eq:contributing} leads to 
$$
\Theta(x) = x^{2H} - \frac{1}{(2H+1)(H+1)}x^{2H} + o(x^{2H}) = \frac{H(2H+3)}{(2H+1)(H+1)}x^{2H} + o(x^{2H}).
$$
We begin by showing \eqref{eq:negligible}. We compute
\begin{eqnarray*}
&&\int_0^1 2u^{2H} - (u+x)^{2H}-|u-x|^{2H}du \\
&=& \int_0^1 |u|^{2H}du - \int_x^{x+1}|u|^{2H}du + \int_0^1 |u|^{2H}du - \int_{-x}^{1-x}|u|^{2H}du\\
&=& \left[\int_0^x -\int_1^{x+1} + \int_{1-x}^1 - \int_{-x}^0\right] |u|^{2H}du\\
&=& \left[\int_1^{x+1} - \int_{1-x}^1\right] u^{2H}du.
\end{eqnarray*}
Hence, by applying Taylor's theorem and L'hopital's rule, we can conclude 
$$
\lim_{x\to 0} \frac{\left[\int_1^{x+1} - \int_{1-x}^1\right] u^{2H}du}{x^{2}} = \lim_{x\to 0}\frac{(1+x)^{2H} - (1-x)^{2H}}{2x} = 2H.
$$
This implies 
$$
\int_0^1 2u^{2H} - (u+x)^{2H}-|u-x|^{2H}du = 2Hx^2 + o(x^{2})
$$
and, in particular, that \eqref{eq:negligible} holds.
For \eqref{eq:contributing}, we can compute as above to get
\begin{eqnarray*}
&&\int_0^1\int_0^1 |u-s+x|^{2H}-|u-s|^{2H}dsdu \\
&=& \left[\int_x^{x+1} - \int_0^1\right]\int_0^1 |u-s|^{2H} dsdu \\
&=& \left[\int_1^{x+1} - \int_0^x\right]\int_0^1 |u-s|^{2H} dsdu\\
&=& \left[\int_1^{x+1}\int_0^1 - \int_0^x\int_x^{1} - \int_0^x\int_0^x\right]|u-s|^{2H}dsdu.
\end{eqnarray*}
Direct calculations show that
$$
\int_1^{x+1}\int_0^1|u-s|^{2H}dsdu = \frac{1}{(2H+1)(2H+2)}\left[(x+1)^{2H+2}-1-x^{2H+2}\right]
$$
and
$$
\int_0^x\int_x^{1}|u-s|^{2H}dsdu =\frac{1}{(2H+1)(2H+2)}\left[1-x^{2H+2}-(1-x)^{2H-2}\right].
$$
Hence, by using Taylor approximation again, we obtain
\begin{eqnarray*}
&&\left[\int_1^{x+1}\int_0^1 - \int_0^x\int_x^{1}\right]|u-s|^{2H}dsdu \\
&=& \frac{1}{(2H+1)(2H+2)}\left[(1+x)^{2H+2}+(1-x)^{2H+2}-2\right] \\
& = & x^{2} + o(x^2) = o(x^{2H}).
\end{eqnarray*}
Observing 
$$
\int_0^x \int_0^x |u-s|^{2H}dsdu = x^{2H} \int_0^1 \int_0^1 |u-s|^{2H}dsdu = \frac{x^{2H}}{(2H+1)(H+1)}
$$
shows \eqref{eq:contributing} and concludes the whole proof.
\end{proof}
\begin{proposition}
\label{pro:exact-L2-rate}
Let $H\in (0,1)$ be fixed and let the number of dimensions $m=1$. Then the error process $\left(\int^t_0\mathcal{G}_\delta(\theta_{s}
\omega)ds-\omega(t)\right)_{t\geq 0}$ has stationary increments, and for any $p\geq 1$ and any $t\geq 0$ we have
\begin{equation}
\label{eq:exact-L2-rate}
\E\left|\int^t_0\mathcal{G}_\delta(\theta_{s}
\omega)ds-\omega(t)\right|^p = \frac{2^{p/2}\Gamma\left(\frac{p+1}{2}\right)}{\sqrt{\pi}} \delta^{pH}\Theta^{\frac{p}{2}}\left(\frac{t}{\delta}\right),
\end{equation}
where $\Gamma(z)$ denotes the Gamma function and $\Theta(x)$ is given by \eqref{eq:theta}.
\end{proposition}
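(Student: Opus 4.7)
The plan is to reduce everything to computing a single Gaussian second moment and then to invoke Gaussianity to pass from $L_2$ to $L_p$. Write $E_t^\delta = \int_0^t \mathcal{G}_\delta(\theta_s\omega)\,ds - \omega(t)$. From the calculation already carried out in the proof of Lemma \ref{lma:L2-convergence}, after rescaling $s = t+\delta u$ and $s=\delta v$, the error admits the representation
\begin{equation*}
E_t^\delta \weq \int_0^1 (\omega(t+\delta u) - \omega(t))\,du \ - \ \int_0^1 \omega(\delta v)\,dv,
\end{equation*}
which shows in particular that $E_t^\delta$ is a centered Gaussian random variable.

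For stationary increments I would argue as follows. For $0\le s<t$, a change of variables yields
\begin{equation*}
\int_s^t \mathcal{G}_\delta(\theta_u\omega)\,du \ - \ (\omega(t)-\omega(s))
\weq \int_0^{t-s} \mathcal{G}_\delta(\theta_v(\theta_s\omega))\,dv \ - \ (\theta_s\omega)(t-s),
\end{equation*}
and since $\theta_s\omega$ has the same law as $\omega$ by stationarity of the increments of fBm, the right-hand side equals in distribution $E_{t-s}^\delta$. This gives the stationarity of the increments of $(E_t^\delta)_{t\ge 0}$.

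The main computation is $\E|E_t^\delta|^2$. Expanding the square in the representation above, one gets three double integrals. Using the fBm covariance \eqref{eq:fbm-covariance}, straightforward bookkeeping shows
\begin{equation*}
\E[(\omega(t+\delta u)-\omega(t))(\omega(t+\delta s)-\omega(t))] \weq \delta^{2H}R_H(u,s),
\end{equation*}
and the mixed and second pure terms produce, after factoring out $\delta^{2H}$ and substituting $x=t/\delta$, exactly the six-term integrand appearing in the definition \eqref{eq:theta} of $\Theta$. A small amount of $s\leftrightarrow u$ symmetrization (combining $s^{2H}+u^{2H}$ into $2u^{2H}$ and likewise for $(x+s)^{2H},\,|x-u|^{2H}$) is needed to match the formula line-by-line; this bookkeeping is the only delicate part.

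Having established $\E[E_t^\delta]^2 = \delta^{2H}\Theta(t/\delta)$, the passage to $p$-th moments is immediate: since $E_t^\delta$ is centered Gaussian, $\E|E_t^\delta|^p = \E|Z|^p (\E[E_t^\delta]^2)^{p/2}$ for $Z\sim N(0,1)$, and the standard identity $\E|Z|^p = 2^{p/2}\Gamma(\tfrac{p+1}{2})/\sqrt{\pi}$ yields \eqref{eq:exact-L2-rate}. The only real obstacle in the whole argument is checking that the integrand obtained from expanding the square agrees, after symmetrization, with the integrand in \eqref{eq:theta}; the rest is organizing the representation and invoking Gaussianity.
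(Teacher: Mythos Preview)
Your proposal is correct and follows essentially the same route as the paper: represent the error as a Gaussian integral, use stationarity of fBm increments (equivalently, invariance under $\theta_s$) to get stationary increments of the error, compute the variance by expanding the square against the covariance \eqref{eq:fbm-covariance} with the $s\leftrightarrow u$ symmetrization, and then lift to $L_p$ via the Gaussian moment formula. The only cosmetic difference is that you rescale to $[0,1]^2$ before expanding, whereas the paper expands over $[0,\delta]^2$ and rescales at the end.
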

\begin{proof}
Denote the error process by $[\Delta_\delta\omega]$, i.e.
\begin{eqnarray*}
[\Delta_\delta\omega](t) &=& \int^t_0\mathcal{G}_\delta(\theta_{s}\omega)ds-\omega(t) = \int^{t+\delta}_t\frac{\omega(s)-\omega(t)}{\delta}ds
-\int^{\delta}_0\frac{\omega(s)}{\delta}ds.
\end{eqnarray*}
For any $t>r$ we have
\begin{eqnarray*}
[\Delta_\delta\omega](t) - [\Delta_\delta\omega](r) &=& \int^{t+\delta}_t\frac{\omega(s)-\omega(t)}{\delta}ds - \int^{r+\delta}_r\frac{\omega(s)-\omega(r)}{\delta}ds \\
&=& \int_0^\delta \frac{\omega(s+t)-\omega(t)-\omega(s+r)+\omega(r)}{\delta}ds,
\end{eqnarray*}
and since fBm has stationary increments, it follows that 
\begin{eqnarray*}
[\Delta_\delta\omega](t) - [\Delta_\delta\omega](r) &=& \int_0^\delta \frac{\omega(s+t)-\omega(t)-\omega(s+r)+\omega(r)}{\delta}ds\\
&\overset{law}{=}&\int_0^\delta \frac{\omega(s+t-r)-\omega(t-r)-\omega(s)+\omega(0)}{\delta}ds\\
&=& [\Delta_\delta\omega](t-r),
\end{eqnarray*}
where $\overset{law}{=}$ denotes equality in distribution.
Thus the error process $x_t$ has stationary increments. It remains to prove \eqref{eq:exact-L2-rate}. First we observe that since $x_t$ is a Gaussian process and 
$$
\E|Z|^p = \frac{2^{p/2}\Gamma\left(\frac{p+1}{2}\right)}{\sqrt{\pi}}
$$
for $Z \sim N(0,1)$, it suffices to consider the case $p=2$. We compute
$$
\left[[\Delta_\delta\omega](t)\right]^2 = \int_0^\delta \int_0^\delta \frac{\left[\omega(s+t)-\omega(t)-\omega(s)\right]\left[\omega(u+t)-\omega(t)-\omega(u)\right]}{\delta^2}dsdu.
$$
After taking expectation, using Fubini's theorem, and plugging in \eqref{eq:fbm-covariance}, we can use some elementary manipulations to collect similar terms together and get
\begin{eqnarray*}
\E \left[[\Delta_\delta\omega](t)\right]^2 &=& \frac{1}{\delta^2}\int_0^\delta \int_0^\delta R_H(s+t,u+t)-R_H(s+t,t)-R_H(s+t,u) \\
&& -R_H(t,u+t) + R_H(t,t) + R_H(t,u) \\
 && -R_H(s,u+t) + R_H(s,t) + R_H(s,u) dsdu\\
&=& \frac{1}{2\delta^2}\int_0^\delta \int_0^\delta 2|t|^{2H} + 2|s|^{2H} + 2|u|^{2H}+|s-u+t|^{2H}+|s-u-t|^{2H} \\
&& -|s+t|^{2H}-|u+t|^{2H}-|t-u|^{2H} - |s-t|^{2H} - 2|s-u|^{2H}dsdu.
\end{eqnarray*}
Here we obtain by interchanging the roles of $s$ and $u$ that
$$
\int_0^\delta \int_0^\delta|s-u+t|^{2H}ds du = \int_0^\delta \int_0^\delta|s-u-t|^{2H}dsdu.
$$
Treating other terms similarly we obtain that
$$
\E \left[[\Delta_\delta\omega](t)\right]^2 = \frac{1}{\delta^2}\int_0^\delta \int_0^\delta t^{2H}+2u^{2H} + |s-u+x|^{2H}-(u+t)^{2H}-|u-t|^{2H} - |u-s|^{2H}ds du.
$$
Now the claim follows directly from a change of variables $u \mapsto \delta u$ and $s\mapsto \delta s$.
\end{proof}
We are now ready to formulate the following theorem that provides the rate of convergence in the space $W_{1,1-\beta}$. Up to multiplicative constants, our result is sharp.
\begin{theorem}
\label{theorem:WZ-fbm-convergence}
Let $H\in \left(\frac12,1\right)$ and $\beta\in(1-H,H)$. Then for any $p\geq 1$ there exists a constant $C=C(H,\beta,m,p)$, depending only on parameters $H,\beta,p$ and the number of dimensions $m$, such that for any $\delta<1$ we have
$$
\frac{1}{C}\delta^{H+\beta-1} \leq \left[\E\left\Vert\int_0^{\cdot}\mathcal{G}_\delta(\theta_{s}\omega)ds-\omega(\cdot)\right\Vert_{1,1-\beta}^p \right]^{\frac{1}{p}} \leq C \delta^{H+\beta-1}.
$$
\end{theorem}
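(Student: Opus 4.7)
The plan is to split the argument into the lower bound (easy) and the upper bound (the substance). For the lower bound I will specialise the supremum defining $\|\cdot\|_{1,1-\beta}$ to a single well-chosen pair $(s,t)$ and invoke Proposition \ref{pro:exact-L2-rate}. For the upper bound I intend to reduce first to $m=1$ via Minkowski, split the norm into a H\"older--sup piece and an integral--sup piece, and attack each with a Garsia--Rodemich--Rumsey (GRR) inequality together with the sharp moment identity of Proposition \ref{pro:exact-L2-rate} and the asymptotics of $\Theta$ from Lemma \ref{lma:theta-properties}.

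\textbf{Lower bound.} I would set $\Delta_\delta(t):=\int_0^t\mathcal{G}_\delta(\theta_s\omega)\,ds-\omega(t)$ so that $\Delta_\delta(0)=0$, and take $(s,t)=(0,\delta)$ inside the supremum. This gives $\|\Delta_\delta\|_{1,1-\beta}^p\ge \delta^{-p(1-\beta)}|\Delta_\delta(\delta)|^p$; after taking expectation, Proposition \ref{pro:exact-L2-rate} supplies $\E|\Delta_\delta(\delta)|^p=c_p\,\delta^{pH}\Theta^{p/2}(1)$, and Lemma \ref{lma:theta-properties} guarantees $\Theta(1)>0$, yielding the desired lower bound $c\,\delta^{H+\beta-1}$.

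\textbf{Upper bound for the H\"older--sup part.} After reducing to $m=1$ (Minkowski), I would decompose
\[
\|\Delta_\delta\|_{1,1-\beta}\le N_1+N_2,\quad N_1:=\sup_{0<s<t<T}\frac{|\Delta_\delta(t)-\Delta_\delta(s)|}{(t-s)^{1-\beta}},\quad N_2:=\sup_{0<s<T}\int_s^T\frac{|\Delta_\delta(y)-\Delta_\delta(s)|}{(y-s)^{2-\beta}}dy.
\]
To control $N_1$ I apply GRR with a large exponent $q$, chosen so that $q(1-\beta)>2$ and $q(H+\beta-1)>1$, obtaining
\[
N_1^q\le C_q\int_0^T\!\!\int_0^T\frac{|\Delta_\delta(t)-\Delta_\delta(s)|^q}{|t-s|^{q(1-\beta)+2}}\,dt\,ds.
\]
Taking expectations, plugging in the identity of Proposition \ref{pro:exact-L2-rate}, and substituting $u=(t-s)/\delta$ will reduce the right side to $C_q\,\delta^{q(H+\beta-1)}\cdot J(q,\delta)$ with $J(q,\delta):=\int_0^{T/\delta}\Theta^{q/2}(u)\,u^{-q(1-\beta)-2}\,du$. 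Lemma \ref{lma:theta-properties}(2) ensures the integrand behaves like $u^{q(H+\beta-1)-2}$ near $0$ (integrable since $q(H+\beta-1)>1$), and Lemma \ref{lma:theta-properties}(1) provides decay $u^{-q(1-\beta)-2}$ at infinity, so $J(q,\delta)$ stays bounded uniformly in $\delta$. This gives $(\E N_1^q)^{1/q}\le C\,\delta^{H+\beta-1}$, and the corresponding bound for general $p$ follows from the equivalence of moments of measurable seminorms of Gaussian processes.

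\textbf{The integral piece and the main obstacle.} For $N_2$ I would apply Minkowski inside the $y$-integral to get, for each $s$, $\|\int_s^T|\Delta_\delta(y)-\Delta_\delta(s)|(y-s)^{\beta-2}dy\|_{L^p(\Omega)}\le C_p\,\delta^{H+\beta-1}\int_0^\infty\sqrt{\Theta(u)}\,u^{\beta-2}du$, the last integral being finite by Lemma \ref{lma:theta-properties}. A continuity (GRR-type) estimate in the parameter $s$ together with the Gaussian hypercontractivity already used for $N_1$ should then upgrade this pointwise control to $(\E N_2^p)^{1/p}\le C\,\delta^{H+\beta-1}$. The real technical difficulty will be to extract the sharp exponent $H+\beta-1$ rather than $H+\beta-1-\varepsilon$: a naive GRR application with fixed $q$ produces a parasitic $\delta^{-1/q}$ factor, and removing it requires carefully tracking the $q$-dependence of the constants from Proposition \ref{pro:exact-L2-rate} and GRR and then using Gaussian moment equivalence to absorb the loss into the final constant $C=C(H,\beta,m,p)$.
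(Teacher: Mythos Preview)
Your lower bound is the same as the paper's. For the upper bound, however, the GRR route does not deliver the sharp exponent: the change of variables you describe for $N_1$ produces $C_q\,\delta^{q(H+\beta-1)-1}J(q)$, not $C_q\,\delta^{q(H+\beta-1)}J(q,\delta)$ (the missing factor $\delta^{-1}$ comes from integrating the Jacobian over the second variable). After the $1/q$-th root this is exactly the parasitic $\delta^{-1/q}$ you mention in your last paragraph, so your two claims are inconsistent. Gaussian moment equivalence transfers a bound on $(\E N_1^q)^{1/q}$ to other $p$, but it cannot improve the exponent of $\delta$; and the GRR constant does not decay in $q$, so sending $q\to\infty$ does not help either. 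As written, your argument yields $\delta^{H+\beta-1-\varepsilon}$ for every $\varepsilon>0$, not $\delta^{H+\beta-1}$, and the same issue recurs in your treatment of $N_2$.

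The paper avoids GRR entirely and uses two ingredients you do not mention. For $N_1$ it appeals to the Borell--TIS inequality to interchange the supremum and the $L^p$ norm, obtaining
\[
\big(\E N_1^p\big)^{1/p}\le C\sup_{0<s<t<T}\frac{\big(\E|\Delta_\delta(t)-\Delta_\delta(s)|^2\big)^{1/2}}{(t-s)^{1-\beta}}
= C\,\delta^{H+\beta-1}\sup_{v>0}\frac{\sqrt{\Theta(v)}}{v^{1-\beta}},
\]
the last supremum being finite by Lemma~\ref{lma:theta-properties}. For $N_2$ the key observation you are missing is that Proposition~\ref{pro:exact-L2-rate} shows the error process $\Delta_\delta$ has \emph{stationary increments}; hence, in law,
\[
\sup_{0<s<t<T}\int_s^t\frac{|\Delta_\delta(y)-\Delta_\delta(s)|}{(y-s)^{2-\beta}}\,dy
\;\le\;\int_0^T\frac{|\Delta_\delta(y)|}{y^{2-\beta}}\,dy,
\]
which removes the supremum outright. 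Minkowski's integral inequality and Proposition~\ref{pro:exact-L2-rate} then give $(\E N_2^p)^{1/p}\le C\,\delta^{H+\beta-1}\int_0^\infty\sqrt{\Theta(y)}\,y^{\beta-2}\,dy$, finite again by Lemma~\ref{lma:theta-properties}, with no $\varepsilon$-loss.
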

\begin{proof}
Using the elementary inequality
$$
\max_{1\leq k\leq m} |x_k| \leq \sqrt{x_1^2 + \ldots x_m^2} \leq |x_1| + \ldots |x_n|,
$$
we obtain that it suffices to consider the one-dimensional case. Throughout the proof, we denote by $C$ any generic unimportant constant (depending on parameters $H,\beta,p$, and $m$) that may vary from line to line. As in the proof of Proposition \ref{pro:exact-L2-rate}, we use short notation $[\Delta_\delta\omega]$ for the error process.

We begin with the lower bound that is rather easy. Indeed, for any $0<s<t<T$ we have
$$
\Vert [\Delta_\delta\omega]\Vert_{1,1-\beta} \geq \frac{|[\Delta_\delta\omega](t)-[\Delta_\delta\omega](s)|}{(t-s)^{1+\beta}}.
$$
Consequently, Proposition \ref{pro:exact-L2-rate} yields
$$
\left[\E\left\Vert [\Delta_\delta\omega]\right\Vert_{1,1-\beta}^p \right]^{\frac{1}{p}} \geq C \delta^H \frac{\sqrt{\Theta\left(\frac{t-s}{\delta}\right)}}{(t-s)^{1+\beta}} = C \delta^{H+\beta-1}\frac{\sqrt{\Theta(v)}}{v^{1+\beta}},
$$
where $v=\frac{t-s}{\delta}$. As this holds for any $0<s<t<T$, it suffices to choose $s$ and $t$ such that 
$\frac{\sqrt{\Theta(v)}}{v^{1+\beta}}>0$. Consider next the upper bound. By Minkowski's inequality, we can study the two terms 
$$
\sup_{0<s<t<T}\frac{|[\Delta_\delta\omega](t) - [\Delta_\delta\omega](s)|}{(t-s)^{1-\beta}}
$$
and
\begin{equation}
\label{eq:integral-term}
\sup_{0<s<t<T}\int_s^t \frac{|[\Delta_\delta\omega](y)-[\Delta_\delta\omega](s)|}{|y-s|^{2-\beta}} \, dy
\end{equation}
separately. For the first one, a simple application of Borell-TIS inequality (see e.g. \cite[Theorem 2.1.1]{Adler-Taylor2007}) implies that, for any topological space $T$ and a centred Gaussian family $(\xi_t)_{t\in T}$, we have
$$
\left[\E \sup_{t\in T} |\xi_t|^p \right]^{\frac{1}{p}} \leq C\sup_{t\in T} \left[\E |\xi_t|^p \right]^{\frac{1}{p}}.
$$
Together with Proposition \ref{pro:exact-L2-rate}, this leads to an upper bound
\begin{eqnarray*}
&&\left[\E\left[\sup_{0<s<t<T}\frac{|[\Delta_\delta\omega](t) - [\Delta_\delta\omega](s)|}{(t-s)^{1-\beta}} \right]^p \right]^{\frac{1}{p}} \\
& \leq &C \left[\sup_{0<s<t<T} \E \left[\frac{|[\Delta_\delta\omega](t) - [\Delta_\delta\omega](s)|}{(t-s)^{1-\beta}} \right]^2 \right]^{\frac{1}{2}}\\
& = & C \sup_{0<s<t<T} \frac{\delta^H \sqrt{\Theta\left(\frac{t-s}{\delta}\right)}}{(t-s)^{1+\beta}} \\
&=& C \delta^{H+\beta-1} \sup_{0<v<\delta^{-1}T} \frac{\sqrt{\Theta\left(v\right)}}{v^{1+\beta}}. 
\end{eqnarray*}
Since $\beta>1-H$, Lemma \ref{lma:theta-properties} implies
\begin{equation*}
\sup_{0<v<\delta^{-1}T} \frac{\sqrt{\Theta\left(v\right)}}{v^{1+\beta}} 
\leq  \sup_{v>0} \frac{\sqrt{\Theta\left(v\right)}}{v^{1+\beta}} < \infty,
\end{equation*}
and hence we have obtained
\begin{equation}
\label{eq:boundary-finite}
\left[\E \left[\sup_{0<s<t<T}\frac{|[\Delta_\delta\omega](t) - [\Delta_\delta\omega](s)|}{(t-s)^{1-\beta}}\right]^p \right]^{\frac{1}{p}} \leq C\delta^{H+\beta-1}.
\end{equation}
Consider next the term \eqref{eq:integral-term}. By Proposition \ref{pro:exact-L2-rate}, $[\Delta_\delta\omega]$ has stationary increments. Thus
\begin{eqnarray*}
\sup_{0<s<t<T}\int_s^t \frac{|[\Delta_\delta\omega](y)-[\Delta_\delta\omega](s)|}{|y-s|^{2-\beta}} \, dy 
&\overset{law}{=} & \sup_{0<s<t<T}\int_s^t \frac{|[\Delta_\delta\omega](y-s)|}{|y-s|^{2-\beta}} \, dy \\
= \sup_{0<s<t<T} \int_0^{t-s} \frac{|[\Delta_\delta\omega](y)|}{y^{2-\beta}} \, dy 
&\leq & \int_0^T \frac{|[\Delta_\delta\omega](y)|}{y^{2-\beta}} \, dy,
\end{eqnarray*}
where $\overset{law}{=}$ denotes the equality in distribution. Now Minkowski's integral inequality together with Proposition \ref{pro:exact-L2-rate} yields
\begin{eqnarray*}
\left[\E \left[\int_0^T \frac{|[\Delta_\delta\omega](y)|}{y^{2-\beta}} \, dy\right]^p \right]^{\frac{1}{p}}
& \leq &\int_0^T \frac{\left[\E|[\Delta_\delta\omega](y)|^p\right]^{\frac{1}{p}}}{y^{2-\beta}} \, dy \\
\leq  C \int_0^T \frac{\delta^H \sqrt{\Theta\left(\frac{y}{\delta}\right)}}{y^{2-\beta}} \, dy 
& \leq & C \delta^{H+\beta-1} \int_0^\infty \frac{\sqrt{\Theta\left(y\right)}}{y^{2-\beta}} \, dy,
\end{eqnarray*}
where $\int_0^\infty \frac{\sqrt{\Theta\left(y\right)}}{y^{2-\beta}} \, dy < \infty$ by Lemma \ref{lma:theta-properties}. Hence we can conclude that
$$
\left[\E \left[\sup_{0<s<t<T}\int_s^t \frac{|[\Delta_\delta\omega](y)-[\Delta_\delta\omega](s)|}{|y-s|^{2-\beta}} \, dy \right]^p \right]^{\frac{1}{p}}\leq C \delta^{H+\beta-1}.
$$
Together with \eqref{eq:boundary-finite}, this concludes the proof.
\end{proof}

\section{Noise perturbations for differential equations driven by H\"older functions}
\label{sec4}
In this section we apply Theorem \ref{theorem:WZ-fbm-convergence} to study approximations of solutions to stochastic differential equations driven by fBm. However, In view of Remark 
\ref{remark:generalisation}, we present our main result of this section, Theorem \ref{thm4.1}, in a general form. The detailed analysis of the case of the fBm is postponed in Subsection \ref{subsec:WZ-SDE}.

Consider the following differential equation,
\begin{equation}\label{eq4.1}
du(t)=f(t,u(t))dt+\sigma(t,u(t))d\omega(t), ~~u(0)=x \in\mathbb{R}^n,
\end{equation}
 driven by $\omega \in C_\alpha$ for some $\alpha>\frac12$. To ensure the existence and uniqueness of the solution, we adopt the following slightly modified (cf. Remark \ref{remark:on-assumptions}) assumptions from \cite{NR2002} for
the coefficient
functions $f:\Omega\times[0,T]\times \mathbb{R}^n \to \mathbb{R}^n$ and $\sigma :\Omega\times[0,T]\times \mathbb{R}^n \to \mathbb{R}^{n\times m}$:

$(i)$ There exists a constant $M>0$ such that for $\forall x, y\in\mathbb{R}^n$, $\forall t\in[0,T]$
\begin{equation}\label{eq4.2}
|\sigma(t,x)-\sigma(t,y)|\leq M|x-y|,
\end{equation}
\begin{equation}\label{eq4.3}
|\sigma(t,x)-\sigma(s,x)|+|\partial_{x_i}\sigma(t,x)-\partial_{x_i}\sigma(s,x)|\leq M|t-s|.
\end{equation}
$(ii)$ For any $N>0$, there exist constants $M_N>0$ such that for $\forall |x|, |y|\leq N$, $\forall t\in[0,T]$
\begin{equation}\label{eq4.4}
|\partial_{x_i}\sigma (t,x)-\partial_{y_i}\sigma (t,y)|\leq M_N|x-y|.
\end{equation}
$(iii)$ There exists a constant $K_0>0$ and $\zeta\in[0,1]$ such that for $\forall x\in\mathbb{R}^n$, $\forall t\in[0,T]$
\begin{equation}\label{eq4.5}
|\sigma (t,x)|\leq K_0(1+|x|^\zeta).
\end{equation}
$(iv)$ For any $N>0$, there exists a constant $L_N>0$ such that for $\forall |x|, |y|\leq N$, $\forall t\in[0,T]$
\begin{equation}\label{eq4.6}
|f (t,x)-f (t,y)|\leq L_N|x-y|.
\end{equation}
$(v)$ There exist a constant $L_0>0$ and a function $b_0\in L^\rho(0,T;\mathbb{R}^n)$, where $\rho>2$, such that for $\forall x\in\mathbb{R}^n$, $\forall t\in[0,T]$
\begin{equation}\label{eq4.7}
|f (t,x)|\leq L_0|x|+b_0(t).
\end{equation}
\begin{remark}
\label{remark:on-assumptions}
The existence and uniqueness result of \cite{NR2002} hold under slightly more general assumptions compared to ours. That is, we have assumed Lipschitz continuity in \eqref{eq4.3} and \eqref{eq4.4} while in \cite{NR2002} these are replaced with H\"older continuity of suitable order. For the sake of simplicity and in order to avoid adding extra layers of parameters and complexity into the proof of Theorem \ref{thm4.1} that is already technical and rather lengthy, we work with the above set of assumptions. Note also that in condition (v) we do not allow $\rho=2$ which at first glimpse might look slightly different compared to $(H_2)$ of \cite{NR2002}. However, existence and uniqueness in the space $W_{\beta,\infty}$ follows from \cite[Theorem 2.1]{NR2002} provided that $\beta \in \left(1-\alpha,\frac12\right)$ and $\rho \geq \frac{1}{\beta}$. These conditions cannot be satisfied simultaneously if $\rho=2$. 
\end{remark}
Since $\omega$ is, in general, not smooth, Equation \eqref{eq4.1} is understood in integral form
\begin{equation}\label{eq4.8}
u(t)-x=\int^t_0f(s,u(s))ds+\int^t_0\sigma(s,u(s))d\omega(s).
\end{equation}
Note that here we have used $u(t)$ for the sake of notational simplicity, although $u$ depends also on the path of $\omega$, i.e. the solution is a flow $u(t,\omega)$. The integral in \eqref{eq4.8} is understood in the generalised Lebesgue-Stieltjes sense. Let $\beta \in \left(1-\alpha,\frac12\right)$ such that $\rho \geq \frac{1}{\beta}$. Then, by \cite[Theorem 2.1]{NR2002}, Equation (\ref{eq4.1}) has a unique solution in the space $C_{1-\beta} \subset W_{\beta,\infty}$ under the given conditions (\ref{eq4.2})-(\ref{eq4.7}). Moreover, by \cite[Proposition 5.1]{NR2002} the solution $u$ satisfies
\begin{equation}
\label{eq:solution-bound}
\Vert u \Vert_{\beta,\infty} \leq C_1 \exp\left(C_2\Vert \omega\Vert_{1,1-\beta}^\kappa\right),
\end{equation}
where $C_1$ and $C_2$ depend on the constants appearing in \eqref{eq4.2}-\eqref{eq4.7}, $T$, $\beta$, and $\alpha$. Finally, $\kappa$ depends solely on $\zeta$ and $\beta$. More precisely (cf. \cite[Proposition 5.1]{NR2002}), 
\begin{equation}
\label{eq:kappa}
\kappa = \begin{cases}
\frac{1}{1-2\beta},\quad \zeta = 1,\\
\frac{\zeta}{2-2\beta},\quad \frac{1-2\beta}{1-\beta} \leq \zeta < 1,\\
\frac{1}{1-\beta},\quad 0\leq \zeta < \frac{1-2\beta}{1-\beta}.
\end{cases}
\end{equation}
Let now $G_\delta \in C_\alpha$ be arbitrary approximation of the noise $\omega$ (for example, a smooth approximation). Then we approximate \eqref{eq4.8} with
\begin{equation}\label{eq4.10}
u_\delta(t)-x=\int^t_0f(s,u_\delta(s))ds+\int^t_0\sigma(s,u_\delta(s))dG_\delta(s).
\end{equation}
Since $G_\delta \in C_\alpha$ as well, Equation \eqref{eq4.10} admits a unique solution $u_\delta \in W_{\beta,\infty}$. It turns out that the solutions $u_\delta$ for \eqref{eq4.10} converge towards solution $u$ for \eqref{eq4.8} as long as $G_\delta$ converge towards the noise $\omega$ of  \eqref{eq4.8} in the
space $W_{1,1-\beta}$. 
\begin{theorem}\label{thm4.1}
Suppose that $f$ and $\sigma$ satisfies conditions \eqref{eq4.2}-\eqref{eq4.7}, and fix $\beta \in \left(1-\alpha,\frac12\right)$ such that $\rho \geq \frac{1}{\beta}$. Furthermore, let $u$ and, for a fixed family $\mathcal{D}$ and $\delta \in \mathcal{D}$, $u_\delta$ denote the unique solutions in $C_{1-\beta} \subset W_{\beta,\infty}$ to \eqref{eq4.8} and \eqref{eq4.10}, respectively. If $\sup_{\delta \in \mathcal{D}} \Vert G_\delta \Vert_{1,1-\beta} < \infty$, then there exists a constant $K$, independent of $\delta \in \mathcal{D}$, such that
\begin{equation}
\label{eq:sol-error}
\Vert u_\delta - u\Vert_{\beta,\infty} \leq K\|G_\delta-\omega\|_{1,1-\beta}.
\end{equation}
In particular, if $\Vert G_\delta - \omega\Vert_{1,1-\beta} \to 0$, then $
\Vert u_\delta - u\Vert_{\beta,\infty} \to 0.
$
\end{theorem}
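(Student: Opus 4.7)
The plan is to subtract the two integral equations \eqref{eq4.8} and \eqref{eq4.10} and insert the pivot $\sigma(s,u(s))\,dG_\delta(s)$ in the stochastic integral in order to obtain the decomposition
$$
u_\delta(t) - u(t) \weq D_1(t) + D_2(t) + D_3(t),
$$
where $D_1(t) = \int_0^t \left[f(s,u_\delta(s)) - f(s,u(s))\right]ds$,
$D_2(t) = \int_0^t \left[\sigma(s,u_\delta(s)) - \sigma(s,u(s))\right]dG_\delta(s)$,
and $D_3(t) = \int_0^t \sigma(s,u(s))\,d[G_\delta - \omega](s)$.
The residual term $D_3$ is the one that carries all of the smallness coming from $G_\delta \to \omega$, while $D_1$ and $D_2$ will be absorbed via a Gronwall-type iteration.

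The first step is to extract uniform-in-$\delta$ bounds. By \eqref{eq:solution-bound} combined with the hypothesis $\sup_{\delta\in\mathcal{D}}\Vert G_\delta\Vert_{1,1-\beta} < \infty$, the quantity $M := \max\bigl(\Vert u\Vert_{\beta,\infty},\, \sup_{\delta\in\mathcal{D}}\Vert u_\delta\Vert_{\beta,\infty}\bigr)$ is finite. This allows the locally Lipschitz assumptions (iv) and (ii) to be upgraded to a global Lipschitz bound on the ball of radius $M$, with a constant I continue to denote by $L$. The drift term $D_1$ is then estimated directly from (iv). For $D_3$, estimate \eqref{eq:basic-bound} yields
$$
|D_3(t)| \wle C\,\Vert G_\delta - \omega\Vert_{1,1-\beta}\,\Vert \sigma(\cdot,u(\cdot))\Vert_{2,\beta},
$$
and $\Vert \sigma(\cdot,u(\cdot))\Vert_{2,\beta}$ is controlled in terms of $M$, the constants in (i) and (iii), and $T$. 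In an analogous fashion \eqref{eq:basic-bound} applied to $D_2$ produces the factor $\Vert G_\delta\Vert_{1,1-\beta}$ times $\Vert \sigma(\cdot,u_\delta(\cdot)) - \sigma(\cdot,u(\cdot))\Vert_{2,\beta}$, the latter controlled via the Lipschitz property of $\sigma$ in terms of $u_\delta - u$.

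Collecting these estimates one derives an integral inequality matching the hypothesis of Lemma \ref{lemma4.1}. The main technical obstacle is that $\Vert\cdot\Vert_{\beta,\infty}$ is not merely the supremum of $|v(t)|$: it also includes the fractional integral $\int_0^t |v(t)-v(s)|\,|t-s|^{-1-\beta}\,ds$, so each of $D_1$, $D_2$ and $D_3$ has to be estimated in both pieces, and, for the $D_2$ contribution, the H\"older increments must be combined with the fractional kernels carefully so that the resulting inequality has exactly the shape
$$
\phi(t) \wle a + b\,t^\beta \int_0^t (t-s)^{-\beta} s^{-\beta} \phi(s)\,ds,
$$
where $\phi(t)$ is the restriction of $\Vert u_\delta - u\Vert_{\beta,\infty}$ to $[0,t]$, $a = K\Vert G_\delta - \omega\Vert_{1,1-\beta}$, and $b$ depends only on $M$, the Lipschitz constants, and $\sup_{\delta\in\mathcal{D}}\Vert G_\delta\Vert_{1,1-\beta}$. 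This core calculation mirrors the existence argument in \cite[Theorem 2.1]{NR2002}, the difference being the new inhomogeneity $a$ coming from $D_3$. Applying Lemma \ref{lemma4.1} with $\alpha = \beta$ then yields the estimate \eqref{eq:sol-error}, from which $\Vert u_\delta - u\Vert_{\beta,\infty} \to 0$ follows at once.
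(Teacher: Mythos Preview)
Your plan is essentially the paper's own argument: localise via the uniform a priori bound \eqref{eq:solution-bound}, split drift and diffusion, feed the resulting integral inequality into Lemma~\ref{lemma4.1}. Two small points deserve comment.

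First, your pivot is the mirror image of the paper's. You write
\[
\int \sigma(u_\delta)\,dG_\delta - \int \sigma(u)\,d\omega
\weq \int\bigl[\sigma(u_\delta)-\sigma(u)\bigr]\,dG_\delta \;+\; \int \sigma(u)\,d[G_\delta-\omega],
\]
whereas the paper inserts $\sigma(u_\delta)\,d\omega$ instead, obtaining $\int \sigma(u_\delta)\,d[G_\delta-\omega] + \int[\sigma(u_\delta)-\sigma(u)]\,d\omega$. Both work; yours has the cosmetic advantage that the ``small'' term $D_3$ involves only $u$ and is manifestly $\delta$-independent apart from the factor $\|G_\delta-\omega\|_{1,1-\beta}$, while the paper's version needs the uniform bound on $\|u_\delta\|_{\beta,\infty}$ already at that stage. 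Conversely, in your $D_2$ the driver is $G_\delta$, so the Gronwall constant $b$ picks up $\sup_\delta\|G_\delta\|_{1,1-\beta}$ rather than $\|\omega\|_{1,1-\beta}$; this is harmless under the stated hypothesis.

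Second, the exponent in your Gronwall inequality is off. When you estimate the fractional piece of $D_2$ you will inevitably combine the kernel $(r-s)^{-\beta}$ from the Weyl--Marchaud derivative with the kernel $(t-s)^{-\beta-1}$ from the $\|\cdot\|_{\beta,\infty}$ norm, and the Fubini/Beta-function computation (cf.\ the paper's derivation of \eqref{eq4.15}) produces $(t-r)^{-2\beta}$, not $(t-r)^{-\beta}$. The correct inequality is
\[
\phi(t) \wle a + b\,t^{2\beta}\int_0^t (t-s)^{-2\beta}s^{-2\beta}\phi(s)\,ds,
\]
and Lemma~\ref{lemma4.1} must be applied with $\alpha=2\beta$ (still $<1$ since $\beta<\tfrac12$). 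This does not affect the conclusion, only the displayed shape of the inequality.
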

\begin{proof}
Our aim is to apply Lemma \ref{lemma4.1}. For this we denote
\begin{equation}\label{eq4.11}
x_t=|u_\delta(t)-u(t)|
+\int^t_0\frac{|u_\delta(t)-u(t)-u_\delta(s)+u(s)|}{(t-s)^{\beta+1}}ds.
\end{equation}
It follows from \eqref{eq:solution-bound} that there exists large $N$ such that $\sup_{\delta}\Vert u_\delta\Vert_{\beta,\infty}\leq N$ and $\Vert u\Vert_{\beta,\infty} \leq N$. This allows us to apply localisation argument in order to apply conditions \eqref{eq4.2}-\eqref{eq4.7}. Throughout the proof, we denote by $K$ a generic constant that may vary from line to line. We stress that $K$ may depend on $N$ and the constants appearing in conditions \eqref{eq4.2}-\eqref{eq4.7}. $K$ may also depend on $\omega$, $T$, and $\beta$. However, $K$ is independent of $\delta$, although from time to time, we also apply bound $\Vert u_\delta\Vert_{\beta,\infty} \leq N$ and include this to the constant $K$, whenever confusion cannot arise.

We begin by computing
  \begin{align*}
x_t &=\left\{|u_\delta(t)-u(t)|
+\int^t_0\frac{|u_\delta(t)-u(t)-(u_\delta(s)-u(s))|}{(t-s)^{\beta+1}}ds\right\}\\
&\le\left\{\left|\int^t_0f(s,u_\delta(s))-f(s,u(s))ds\right|
+\int^t_0\frac{|\int^t_sf(r,u_\delta(r))-f(r,u(r))dr|}{(t-s)^{\beta+1}}ds\right\}\\
&\quad+\Bigg\{\left|\int^t_0\sigma(s,u_\delta(s))d G_\delta(s)
-\int^t_0\sigma(s,u(s))d\omega(s)\right|\\
&\quad+\int^t_0\frac{|\int^t_s\sigma(r,u_\delta(r))dG_\delta(r)
-\int^t_s\sigma(r,u(r))d\omega(r)|}{(t-s)^{\beta+1}}ds\Bigg\}\\
&\doteq I_1(t)+I_2(t).
\end{align*}
Note also that
$$
\|u_\delta-u\|_{\beta,\infty} = \sup_{t\in[0,T]} (I_1(t)+I_2(t)).
$$
For $I_1(t)$, it follows from (\ref{eq4.6}) that
\begin{align*}
  I_1&\le L_N \int^t_0|u_\delta(s)-u(s)|ds
+L_N\int^t_0\frac{\int^t_s  |u_\delta(r)-u(r)| dr}{(t-s)^{\beta+1}}ds\\
 &=L_N \int^t_0|u_\delta(s)-u(s)|ds
+L_N\int^t_0\int^r_0 \frac{ |u_\delta(r)-u(r)|}{(t-s)^{\beta+1}}ds dr\\
&\le L_N \int^t_0|u_\delta(s)-u(s)|ds
+\frac{L_N}{\beta}\int^t_0 \frac{ |u_\delta(r)-u(r)|}{(t-r)^{\beta}}dr\\
&\le L_N \left(T^{2\beta}+\frac{T^\beta}{\beta}\right)t^{2\beta}\int^t_0 \frac{ |u_\delta(r)-u(r)|}{(t-r)^{2\beta}r^{2\beta}}dr.
\end{align*}
By the notation of (\ref{eq4.11}), we have thus observed
\begin{equation}\label{eq4.12}
I_1(t)\le Kt^{2\beta}\int^t_0 (t-s)^{-2\beta}s^{-2\beta}x_sds.
\end{equation}

For $I_2(t)$, we observe first that, for any $s,t \in [0,T]$,
\begin{align*}
&\left|\int_s^t\sigma(r,u_\delta(r))dG_\delta(r)
-\int_s^t\sigma(r,u(r))d\omega(r)\right|\\
&\leq\left|\int_s^t\sigma(r,u_\delta(r))d(G_\delta(r)-\omega(r))\right|
+\left|\int_s^t(\sigma(r,u_\delta(r))-\sigma(r,u(r)))d\omega(r)\right|.
\end{align*}
For the first term we get, by \eqref{eq:Holder-for-integral}, that
\begin{equation}\label{eq4.13}
\left|\int_s^t\sigma(r,u_\delta(r))d(G_\delta(r)-\omega(r))\right|\leq\frac{|t-s|^{1-\beta}}{\Gamma(1-\beta)\Gamma(\beta)}\|G_\delta-\omega\|_{1,1-\beta}\|\sigma(\cdot,u_\delta)\|_{\beta,\infty}.
\end{equation}
Here
$$
\Vert \sigma(\cdot,u_\delta)\Vert_{\beta,\infty} \leq M\left(T+\frac{T^{1-\beta}}{1-\beta}\right)+|\sigma(0,0)|+\Vert u_\delta\Vert_{\beta,\infty}\leq K
$$
by Lipschitz continuity of $\sigma$ and the fact that $\Vert u_\delta\Vert_{\beta,\infty}\leq N$.
For the second term, we apply \eqref{eq:basic-bound} and Lemma 7.1 of \cite{NR2002} to get
\begin{align*}
&\left|\int_0^t \sigma(s,u_\delta(s))-\sigma(s,u(s))d\omega(s)\right|\\
&\quad\leq K\bigg[\int_0^t\frac{|\sigma(s,u_\delta(s))-\sigma(s,u(s))|}{s^\beta}ds\\
&\qquad+\beta\int_0^t\int_0^s\frac{|\sigma(s,u_\delta(s))-\sigma(s,u(s))|
-|\sigma(r,u_\delta(r))-\sigma(r,u(r))|}{(s-r)^{\beta+1}}drds\bigg]\\
&\quad\leq K\bigg[\int_0^t\frac{|u_\delta(s)-u(s)|}{s^\beta}ds+\int_0^t\int_0^s\frac{|u_\delta(s)-u(s)
-u_\delta(r)+u(r)|}{(s-r)^{\beta+1}}drds\\
&\qquad+\int_0^t\int_0^s\frac{|u_\delta(s)-u(s)|(|u_\delta(s)-u_\delta(r)|+|u(s)-u(r)|)}
{(s-r)^{\beta+1}}drds\\
&\qquad+\int_0^t\int_0^s\frac{|u_\delta(s)-u(s)|}
{(s-r)^{\beta}}drds\bigg]\\
&\quad\leq K\bigg[T^\beta t^{2\beta}\int_0^t(t-s)^{-2\beta}s^{-2\beta}|u_\delta(s)-u(s)|ds\\
&\qquad+T^{2\beta} t^{2\beta}\int_0^t
(t-s)^{-2\beta}s^{-2\beta}\int_0^s\frac{|u_\delta(s)-u(s)
-u_\delta(r)+u(r)|}{(s-r)^{\beta+1}}drds\\
&\qquad+T^{2\beta} t^{2\beta}(\|u_\delta\|_{\beta,\infty}+\|u\|_{\beta,\infty})\int_0^t(t-s)^{-2\beta}s^{-2\beta}|u_\delta(s)-u(s)|ds\\
&\qquad+\frac{T^{1+\beta} t^{2\beta}}{1-\beta}\int_0^t(t-s)^{-2\beta}s^{-2\beta}|u_\delta(s)-u(s)|ds\bigg].
\end{align*}
Since we have $\Vert u_\delta\Vert_{\beta,\infty}\leq N$ and $\Vert u\Vert_{\beta,\infty}\leq N$, it follows that
\begin{equation}\label{eq4.14}
\left|\int^t_0\sigma(s,u_\delta(s))-\sigma(s,u(s))d\omega(s)\right|\leq Kt^{2\beta}\int^t_0(t-s)^{-2\beta}s^{-2\beta}x_sds.
\end{equation}
Similarly, we have
\begin{align*}
&\int_0^t\frac{\left|\int_s^t\sigma(r,u_\delta(r))dG_\delta(r)
-\int_s^t\sigma(r,u(r))d\omega(r)\right|}{(t-s)^{\beta+1}}ds\\
&\quad \leq K\int_0^t\frac{|t-s|^{1-\beta}\|G_\delta-\omega\|_{1,1-\beta}\|\sigma(\cdot,u_\delta)\|_{\beta,\infty}}{(t-s)^{\beta+1}}ds\\
&\quad +\int_0^t\frac{\left|\int_s^t(\sigma(r,u_\delta(r))
-\sigma(r,u(r)))d\omega(r)\right|}{(t-s)^{\beta+1}}ds\\
&\quad \leq K\|G_\delta-\omega\|_{1,1-\beta} \\
&\quad +\int_0^t\frac{\left|\int_s^t(\sigma(r,u_\delta(r))
-\sigma(r,u(r)))d\omega(r)\right|}{(t-s)^{\beta+1}}ds.
\end{align*}
Here
\begin{align*}
&\int_0^t\frac{\left|\int_s^t(\sigma(r,u_\delta(r))
-\sigma(r,u(r)))d\omega(r)\right|}{(t-s)^{\beta+1}}ds\\
&\quad\leq K\int_0^t\frac{\int_s^t\frac{|\sigma(r,u_\delta(r))
-\sigma(r,u(r))|}{(r-s)^\beta}dr}{(t-s)^{\beta+1}}ds\\
&\qquad+K\int_0^t\frac{\int_s^t\int_s^r\frac{|\sigma(r,u_\delta(r))
-\sigma(r,u(r))-\sigma(y,u_\delta(y))+\sigma(y,u(y))|}{(r-y)^{\beta+1}}dydr}{(t-s)^{\beta+1}}ds\\
&\quad\leq K\int_0^t\frac{\int_s^t\frac{|u_\delta(r)
-u(r)|}{(r-s)^\beta}dr}{(t-s)^{\beta+1}}ds
+K\int_0^t\frac{\int_s^t\int_s^r\frac{|u_\delta(r)
-u(r)-u_\delta(y)+u(y)|}{(r-y)^{\beta+1}}dydr}{(t-s)^{\beta+1}}ds\\
&\qquad+K\int_0^t\frac{\int_s^t\int_s^r\frac{|u_\delta(r)-u(r)|
(|u_\delta(r)-u_\delta(y)|+|u(r)-u(y)|)}{(r-y)^{\beta+1}}dydr}{(t-s)^{\beta+1}}ds\\
&\qquad+K\int_0^t\frac{\int_s^t\int_s^r\frac{|u_\delta(r)-u(r)|}{(r-y)^{\beta}}dydr}{(t-s)^{\beta+1}}ds.
\end{align*}

Let $s=r-(t-r)z$. For the first term, we have
\begin{align*}
&\int_0^t\frac{\int_s^t\frac{|u_\delta(r)-u(r)|}{(r-s)^\beta}dr}{(t-s)^{\beta+1}}ds\\
&\quad=\int_0^t\int_0^r(t-s)^{-\beta-1}\frac{|u_\delta(r)-u(r)|}{(r-s)^\beta}dsdr\\
&\quad=\int_0^t\int_0^{\frac{r}{t-r}}(t-r)^{-2\beta}(1+z)^{-\beta-1}z^{-\beta}|u_\delta(r)-u(r)|dzdr\\
&\quad\leq B(2\beta,1-\beta)\int_0^t(t-r)^{-2\beta}|u_\delta(r)-u(r)|dr\\
&\quad\leq K t^{2\beta}\int_0^t(t-r)^{-2\beta}r^{-2\beta}|u_\delta(r)-u(r)|dr,
\end{align*}
where
\begin{equation*}
B(p,q)=\int_0^\infty(1+t)^{-x-y}t^{y-1}dt.
\end{equation*}
This yields 
\begin{equation}\label{eq4.15}
\int_0^t\frac{\int_s^t\frac{|u_\delta(r)-u(r)|}{(r-s)^\beta}dr}{(t-s)^{\beta+1}}ds\leq Kt^{2\beta}\int^t_0(t-s)^{-2\beta}s^{-2\beta}x_sds.
\end{equation}
For the second term, we have
\begin{align*}
&\int_0^t\frac{\int_s^t\int_s^r\frac{|u_\delta(r)
-u(r)-u_\delta(y)+u(y)|}{(r-y)^{\beta+1}}dydr}{(t-s)^{\beta+1}}ds\\
&\quad=\int_0^t\int_0^r\int_0^y(t-s)^{-\beta-1}\frac{|u_\delta(r)-u(r)-u_\delta(y)+u(y)|}
{(r-y)^{\beta+1}}dsdydr\\
&\quad\leq\int_0^t\int_0^r\frac{1}{\beta}(t-y)^{-\beta}
\frac{|u_\delta(r)-u(r)-u_\delta(y)+u(y)|}{(r-y)^{\beta+1}}dydr\\
&\quad\leq KT^\beta t^{2\beta}\int_0^t(t-r)^{-2\beta}r^{-2\beta}\int_0^r
\frac{|u_\delta(r)-u(r)-u_\delta(y)+u(y)|}{(r-y)^{\beta+1}}dydr,
\end{align*}
leading to
\begin{equation}\label{eq4.16}
\int_0^t\frac{\int_s^t\int_s^r\frac{|u_\delta(r)
-u(r)-u_\delta(y)+u(y)|}{(r-y)^{\beta+1}}dydr}{(t-s)^{\beta+1}}ds\leq Kt^{2\beta}\int^t_0(t-s)^{-2\beta}s^{-2\beta}x_sds.
\end{equation}
For the third term, we obtain
\begin{align*}
&\int_0^t\frac{\int_s^t\int_s^r\frac{|u_\delta(r)-u(r)|
(|u_\delta(r)-u_\delta(y)|+|u(r)-u(y)|)}{(r-y)^{\beta+1}}dydr}{(t-s)^{\beta+1}}ds\\
&\quad\leq (\|u_\delta\|_{\beta,\infty}+\|u\|_{\beta,\infty})\int_0^t\frac{\int_s^t|u_\delta(r)
-u(r)|dr}{(t-s)^{\beta+1}}ds\\
&\quad\leq K\int_0^t\int_0^r(t-s)^{-\beta-1}|u_\delta(r)-u(r)|dsdr\\
&\quad\leq K\int_0^t(t-r)^{-\beta}|u_\delta(r)-u(r)|dr\\
&\quad\leq KT^\beta t^{2\beta}\int_0^t(t-r)^{-2\beta}r^{-2\beta}|u_\delta(r)-u(r)|dr,
\end{align*}
and hence
\begin{equation}\label{eq4.17}
\int_0^t\frac{\int_s^t\int_s^r\frac{|u_\delta(r)-u(r)|
(|u_\delta(r)-u_\delta(y)|+|u(r)-u(y)|)}{(r-y)^{\beta+1}}dydr}{(t-s)^{\beta+1}}ds
\leq Kt^{2\beta}\int^t_0(t-s)^{-2\beta}s^{-2\beta}x_sds.
\end{equation}
Finally, for the fourth term we obtain
\begin{align*}
&\int_0^t\frac{\int_s^t\int_s^r\frac{|u_\delta(r)-u(r)|}{(r-y)^{\beta}}dydr}{(t-s)^{\beta+1}}ds\\
&\quad\leq \frac{T^{1-\beta}}{1-\beta}\int_0^t\frac{\int_s^t|u_\delta(r)
-u(r)|dr}{(t-s)^{\beta+1}}ds\\
&\quad\leq K\int_0^t\int_0^r(t-s)^{-\beta-1}|u_\delta(r)-u(r)|dsdr\\
&\quad\leq K\int_0^t(t-r)^{-\beta}|u_\delta(r)-u(r)|dr\\
&\quad\leq KT^\beta t^{2\beta}\int_0^t(t-r)^{-2\beta}r^{-2\beta}|u_\delta(r)-u(r)|dr,
\end{align*}
so
\begin{equation}\label{eq4.18}
\int_0^t\frac{\int_s^t\int_s^r\frac{|u_\delta(r)-u(r)|}{(r-y)^{\beta}}dydr}{(t-s)^{\beta+1}}ds
\leq Kt^{2\beta}\int^t_0(t-s)^{-2\beta}s^{-2\beta}x_sds.
\end{equation}
Consequently, by combining the estimates (\ref{eq4.13})-(\ref{eq4.18}) we obtain an estimate
\begin{equation}\label{eq4.19}
I_2(t) \leq K\|G_\delta-\omega\|_{1,1-\beta} + Kt^{2\beta}\int^t_0 (t-s)^{-2\beta}s^{-2\beta}x_sds.
\end{equation}
With (\ref{eq4.12}) this leads to
\begin{equation*}
\begin{split}
x_t &\leq I_1(t)+I_2(t) \\
& \leq K\|G_\delta-\omega\|_{1,1-\beta} +
Kt^{2\beta}\int^t_0 (t-s)^{-2\beta}s^{-2\beta}x_sds,
\end{split}
\end{equation*}
and thus we may apply Lemma \ref{lemma4.1} to get
$$
\Vert u_\delta - u\Vert_{\beta,\infty} = \sup_{t\in[0,T]} x_t \leq K\|G_\delta-\omega\|_{1,1-\beta}.
$$
This shows \eqref{eq:sol-error} and completes the proof.
\end{proof}

\subsection{Wong-Zakai approximations for SDEs driven by fBm}
\label{subsec:WZ-SDE}
Consider the following stochastic differential equations (SDEs) driven by fBm
\begin{equation}\label{eq:sde-fbm}
du(t)=f(t,u(t))dt+\sigma(t,u(t))dW^H(t), ~~u(0)=x \in\mathbb{R}^n,
\end{equation}
where $H>\frac12$,
and the corresponding Wong--Zakai approximation
\begin{equation}\label{eq:sde-fbm-approx}
\dot{u}_\delta(t)=f(t,{u}_\delta(t))dt+\sigma(t,{u}_\delta(t))\mathcal {G}_\delta(\theta_t\omega),~~ {u}_\delta(0)=x \in\mathbb{R}^n,
\end{equation}
where $\mathcal {G}_\delta(\theta_t\omega)$ is given by (\ref{equ4.1}). We denote
\begin{equation*}
  G_\delta(t,\omega):=( G_\delta(t,\omega_1),\cdots, G_\delta(t,\omega_m)),
\end{equation*}
where for each $1\le j\le m$
\begin{equation*}
G_\delta(t,\omega_j):=\int_0^t\mathcal {G}_\delta(\theta_s\omega_j)ds.
\end{equation*}
Then Equation (\ref{eq:sde-fbm-approx}) can be rewritten as
\begin{equation}\label{eq:sde-fbm-approx2}
\dot{u}_\delta=f(t,{u}_\delta)dt+\sigma(t,{u}_\delta)\dot{G}_\delta(t,\omega),~~ u_\delta(0)=x, x\in\mathbb{R}^n,
\end{equation}
and equations (\ref{eq:sde-fbm-approx}) and (\ref{eq:sde-fbm}) can be written in integral forms as
\begin{equation}\label{eq:sde-fbm-approx-integral}
u_\delta(t,\omega)-x=\int^t_0f(s,u_\delta(s,\omega))ds+\int^t_0\sigma(s,u_\delta(s,\omega))\dot{G}_\delta(s,\omega)ds,
\end{equation}
and
\begin{equation}\label{eq:sde-fbm-integral}
u(t,\omega)-x=\int^t_0f(s,u(s,\omega))ds+\int^t_0\sigma(s,u(s,\omega))d\omega(s),
\end{equation}
where we have identified $W^H$ with $\omega$.
Together with (\ref{equ3.3}), conditions \eqref{eq4.2}-\eqref{eq4.7} give the existence and uniqueness of solution of random ordinary differential Equation (\ref{eq:sde-fbm-approx2}) by standard arguments. On the other hand, solution to \eqref{eq:sde-fbm-approx2}
is the solution to \eqref{eq:sde-fbm-approx-integral}, where the integral
$$
\int^t_0\sigma(s,u_\delta(s,\omega))\dot{G}_\delta(s,\omega)ds = \int^t_0\sigma(s,u_\delta(s,\omega))dG_\delta(s,\omega)
$$
can also be understood in the generalised Lebesgue-Stieltjes sense. Thus, again by \cite[Theorem 2.1]{NR2002}, Equation \eqref{eq:sde-fbm-approx-integral} has a unique solution in the space $C_{1-\beta} \subset W_{\beta,\infty}$, where $\beta\in \left(1-H,\frac12\right)$.

The following theorem provides us the fact that one can approximate solutions to SDEs driven by fBm by studying equations driven by our smooth stationary approximation. The claim follows essentially by repeating arguments presented in the proof of Theorem \ref{theorem:WZ-fbm-convergence} and in \cite{NR2002}. For this reason we only sketch the main ideas and omit the details.
\begin{theorem}
Suppose that $f$ and $\sigma$ satisfies conditions \eqref{eq4.2}-\eqref{eq4.7}, and fix $\beta \in \left(1-H,\frac12\right)$ such that $\rho\geq \frac{1}{\beta}$. 
Let $u_\delta$ and $u$ be the (unique in $C_{1-\beta}$) solution to \eqref{eq:sde-fbm-approx-integral} and \eqref{eq:sde-fbm-integral}, respectively. Then, as $\delta \to 0$, 
\begin{enumerate}
\item We have $\Vert u_\delta - u\Vert_{\beta,\infty} \to 0$ in probability. 
\item If in addition $\alpha < \frac{2-\zeta}{4}$ and the constants and the function $b_0$ in \eqref{eq4.2}-\eqref{eq4.7} are deterministic and can be chosen independently of $N$, then there exists a constant $K$, independent of $\delta$, such that
$$
\left[\E\Vert u_\delta - u\Vert_{\beta,\infty}^p \right]^{\frac{1}{p}} \leq K \delta^{H+\beta-1}.
$$
\end{enumerate}
\end{theorem}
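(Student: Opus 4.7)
The plan is to derive both statements from the pathwise stability bound of Theorem \ref{thm4.1} combined with the sharp $L^p$ rate of Theorem \ref{theorem:WZ-fbm-convergence}. First, since $\beta \in (1-H,\tfrac12) \subset (1-H,H)$, Theorem \ref{theorem:WZ-fbm-convergence} yields the uniform estimate
\begin{equation*}
\bigl[\E\|G_\delta - \omega\|_{1,1-\beta}^p\bigr]^{1/p} \leq C\delta^{H+\beta-1}, \quad \delta \in (0,1),
\end{equation*}
so in particular $\|G_\delta-\omega\|_{1,1-\beta}\to 0$ in $L^p$ and in probability. Since $G_\delta$ is pathwise $C^1$, Theorem \ref{thm4.1} applies pathwise for each $\omega$, producing a random constant $K(\omega)$ with
\begin{equation*}
\|u_\delta-u\|_{\beta,\infty} \leq K(\omega)\,\|G_\delta-\omega\|_{1,1-\beta},
\end{equation*}
where $K(\omega)$ depends on $\omega$ only through $\|\omega\|_{1,1-\beta}$, $\|G_\delta\|_{1,1-\beta}$, and an a.s.\ finite bound $N(\omega)$ on $\|u\|_{\beta,\infty}+\|u_\delta\|_{\beta,\infty}$.

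For statement (1), I would use a subsequence argument. Given any sequence $\delta_n\to 0$, by $L^p$-convergence there is a subsequence $\delta_{n_k}$ with $\|G_{\delta_{n_k}}-\omega\|_{1,1-\beta}\to 0$ almost surely. Along this subsequence $\sup_k\|G_{\delta_{n_k}}\|_{1,1-\beta}$ is a.s.\ finite, so $K(\omega)$ is a.s.\ bounded uniformly in $k$ and the pathwise inequality yields $\|u_{\delta_{n_k}}-u\|_{\beta,\infty}\to 0$ almost surely. Since every subsequence admits such a further subsubsequence, the full family converges in probability.

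For statement (2) the task reduces to controlling moments of $K(\omega)$ uniformly in $\delta$. Under the hypothesis that the constants and $b_0$ in \eqref{eq4.2}--\eqref{eq4.7} are deterministic and independent of $N$, a careful bookkeeping in the proof of Theorem \ref{thm4.1} shows that $K(\omega)$ depends on $\omega$ only polynomially in $\|\omega\|_{1,1-\beta}$ and in $\|u\|_{\beta,\infty}+\|u_\delta\|_{\beta,\infty}$. Substituting the a priori bound \eqref{eq:solution-bound} applied to $u$ and (with driver $G_\delta$) to $u_\delta$ gives
\begin{equation*}
K(\omega) \leq C\exp\bigl(c\|\omega\|_{1,1-\beta}^{\kappa}+c\|G_\delta\|_{1,1-\beta}^{\kappa}\bigr)
\end{equation*}
with $\kappa$ as in \eqref{eq:kappa}. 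The additional condition $\alpha<(2-\zeta)/4$ is precisely what forces $\kappa<2$, so Fernique's theorem applied to the centred Gaussian seminorms $\|\omega\|_{1,1-\beta}$ and $\|G_\delta-\omega\|_{1,1-\beta}$ yields $\sup_{\delta\in(0,1)}\E\,K(\omega)^{q} < \infty$ for every $q\geq 1$. Cauchy--Schwarz then closes the argument:
\begin{equation*}
\E\|u_\delta-u\|_{\beta,\infty}^p \leq \bigl(\E K(\omega)^{2p}\bigr)^{1/2}\bigl(\E\|G_\delta-\omega\|_{1,1-\beta}^{2p}\bigr)^{1/2} \leq K'\,\delta^{p(H+\beta-1)}.
\end{equation*}

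The main obstacle is establishing Fernique integrability for the family $\{G_\delta\}_{\delta\in(0,1)}$ \emph{uniformly} in $\delta$. The decomposition $\|G_\delta\|_{1,1-\beta}\leq\|\omega\|_{1,1-\beta}+\|G_\delta-\omega\|_{1,1-\beta}$ together with the uniform-in-$\delta$ $L^p$ bound from Theorem \ref{theorem:WZ-fbm-convergence} upgrades, via the quantitative form of the Borell--TIS inequality, to a uniform Gaussian tail for $\|G_\delta-\omega\|_{1,1-\beta}$. A secondary technicality is to confirm that, with constants independent of $N$, the proof of Theorem \ref{thm4.1} yields a \emph{polynomial} (rather than exponential) dependence of $K$ on $\|u_\delta\|_{\beta,\infty}+\|u\|_{\beta,\infty}$; this is visible from the linear way in which these norms enter the estimates \eqref{eq4.12}--\eqref{eq4.18}.
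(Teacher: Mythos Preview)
Your proposal follows essentially the same route as the paper's (sketched) proof: both parts are obtained by combining the pathwise stability estimate of Theorem \ref{thm4.1} with the $L^p$ rate of Theorem \ref{theorem:WZ-fbm-convergence}, and for part (2) both arguments track the constant $K$ through the proof of Theorem \ref{thm4.1}, insert the a~priori bound \eqref{eq:solution-bound}, use that the extra hypothesis forces $\kappa<2$, and close via Fernique-type exponential moment bounds and H\"older's inequality. Your subsequence argument for part (1) and your explicit treatment of uniform-in-$\delta$ Fernique integrability are more detailed than the paper's terse ``follows immediately,'' but the underlying strategy is identical.
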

\begin{proof}
The first claim follows immediately by combining statements of Theorem \ref{theorem:WZ-fbm-convergence} and Theorem \ref{thm4.1} together with the fact that convergence in $L_p(\Omega)$ implies convergence in probability. Similarly, following the proof of Theorem \ref{thm4.1} and applying \eqref{eq:solution-bound} gives us
\begin{equation*}
\Vert u_\delta - u\Vert_{\beta,\infty} \leq C e^{C \left[\Vert u_\delta\Vert_{1,1-\beta}^\kappa + \Vert u\Vert_{1,1-\beta}^\kappa\right]}\Vert\Vert u_\delta - u\Vert_{\beta,\infty},
\end{equation*}
where $\kappa$ is given by \eqref{eq:kappa}. Estimating the norm $\Vert \cdot \Vert_{1,1-\beta}$ from above by the H\"older norm, obtaining that given assumptions imply $\kappa<2$, repeating the arguments of \cite{NR2002} or \cite{Azmoodeh-Sottinen-Viitasaari-Yazigi-2014} to estimate exponential moments of the H\"older norm, and using H\"older inequality together with Theorem \ref{theorem:WZ-fbm-convergence} proves the claim. The repetition of the technical details are left to the reader.
\end{proof}
\begin{remark}
\label{rem:rate}
As expected, the sharp rate of convergence in Theorem \ref{theorem:WZ-fbm-convergence} translates into a similar rate of convergence for the solutions. While here we have only provided an upper bound, the obtained rate is the best one can hope in the general setting that covers all the possible choices of the coefficients $f$ and $\sigma$. 
\end{remark}
\section{Discussions}
\label{sec:discussions}
In this article we have introduced Wong--Zakai approximations of the fractional noise and studied its convergence properties. Our approximation is valid on the full range $H\in(0,1)$ of the Hurst parameter. As an application, we proved that, for the case $H>\frac12$, the solutions of the approximating differential equations converge, in the norm $\Vert \cdot\Vert_{\beta,\infty}$, towards the original solution. The only needed feature for the convergence of approximating solutions to differential equations is that the approximation of the noise converge in $W_{1,1-\beta}$. Indeed, on top of technical computations the only additional ingredient in the proof of Theorem \ref{thm4.1} was a deterministic Gronwall type Lemma \ref{lemma4.1}. 

In the particular case of the fractional Brownian motion, convergence of the approximation together with a sharp rate of convergence can be seen from Lemma \ref{lma:theta-properties} and Proposition \ref{pro:exact-L2-rate} whose proof uses the fact that the underlying process is the fBm. This in turn translates into sharp result, provided in Theorem \ref{theorem:WZ-fbm-convergence}, on the rate of convergence in the space $W_{1,1-\beta}$. In addition, proof of Theorem \ref{theorem:WZ-fbm-convergence} also applies Gaussianity and stationarity of the increments. However, as pointed out already in Remark \ref{remark:generalisation}, the convergence in $L_p(\Omega)$ follows immediately if
\begin{equation}
\label{eq:discuss-holder}
\E|\omega(u)-\omega(v)| \leq C|u-v|^{H},
\end{equation}
and
$
[\Delta_\delta\omega](t) = \int_0^{t}\mathcal{G}_\delta(\theta_{s}\omega)ds-\omega(t)
$
satisfies the hypercontractivity property
\begin{equation}
\label{eq:discuss-hyper}
\E \left|[\Delta_\delta\omega](t)-[\Delta_\delta\omega](s)\right|^p \leq C_p [\E \left|[\Delta_\delta\omega](t)-[\Delta_\delta\omega](s)\right| ]^{p}.
\end{equation}
The convergence in $W_{1,1-\beta}$ on the other hand could be based on Garsia-Rodemich-Rumsey lemma \cite{GRR1970} which provides an inequality
$$
|f(t)-f(s)|^p \leq C|t-s|^{\alpha p-1}\int_0^T\int_0^T \frac{|f(x)-f(y)|^p}{|x-y|^{\alpha p +1}}dxdy,
$$
valid for all continuous functions $f$ on $[0,T]$, all $p\geq 1$, and $\alpha > \frac{1}{p}$. Using this one can follow the proof of \cite[Lemma 7.4]{NR2002} or \cite[Theorem 1]{Azmoodeh-Sottinen-Viitasaari-Yazigi-2014} and, assuming hypercontractivity, eventually obtain convergence of the moments of $\Vert [\Delta_\delta\omega]\Vert_{1,1-\beta}$. Unfortunately however, while providing the desired convergence in the space $W_{1,1-\beta}$, this approach does not yield (in any straightforward manner) good bounds for the rate of convergence. Indeed, even in the case of fBm, one obtains that the $p$:th moment is proportional to $\delta^{\gamma}$, where $\gamma\in(0,1)$ can be chosen arbitrarily and $p$ has to be chosen large enough. Thus the obtained rate of convergence in $L_p(\Omega)$ is proportional to $\delta^{\frac{\gamma}{p}}$ which in turn means that one obtains worse rate for higher moments. This is significantly worse than the sharp rate $\delta^{H+\beta-1}$, valid for all $p\geq 1$, provided in Theorem \ref{theorem:WZ-fbm-convergence}. 

On the positive side, Garsia-Rodemich-Rumsey inequality still provides the required convergence (despite poor rate of convergence) with very modest assumptions, namely hypercontractivity and H\"older continuity, on the underlying process $\omega$. This fact extends our results to cover a rather rich class of stochastic processes. For example, all H\"older continuous Gaussian processes fall into this category, see \cite[Theorem 1]{Azmoodeh-Sottinen-Viitasaari-Yazigi-2014}. Interesting non-Gaussian examples include $k$th order Hermite processes. They share many properties with the fBm including covariance structure, H\"older continuity, and self-similarity, though they are not Gaussian processes but instead belong to the $k$th chaos (see, e.g. \cite{maejima-tudor} for definition and details). In this case \eqref{eq:discuss-holder} and \eqref{eq:discuss-hyper} are both valid, and consequently the results of this paper provide stationary approximations to Hermite processes such that the corresponding solutions of stochastic differential equations converge as well. It is worth to note that, to the best of our knowledge, stochastic differential equations driven by Hermite processes have not been extensively studied in the literature.

Finally we want to emphasize once more that while the results of Section \ref{sec4} require $H>\frac12$, the stationary Wong--Zakai approximation and the exact $L_p(\Omega)$-error provided in Theorem \ref{theorem:WZ-fbm-convergence} are valid for all $H\in(0,1)$. This provides a natural question for future research on the convergence of solutions in the region $H\in \left(0,\frac12\right)$, in which case rough path theory or some other method of integration has to be considered. 

\section*{Acknowledgements} This research was partially supported by the National Natural Science Foundation of China (No. 11871225,11501216), and the Fundamental Research Funds for the Central Universities (No. 2018MS58).

\bibliographystyle{plain}
\bibliography{bibli_wz}

\begin{thebibliography}{10}

\bibitem{Adler-Taylor2007}
R.~Adler and J.~Taylor.
\newblock {\em Random fields and geometry}.
\newblock Springer Monographs in Mathematics. Springer, New York, 2007.

\bibitem{Azmoodeh-Sottinen-Viitasaari-Yazigi-2014}
E.~Azmoodeh, T.~Sottinen, L.~Viitasaari, and A.~Yazigi.
\newblock Necessary and sufficient conditions for {H}\"older continuity of
  {G}aussian processes.
\newblock {\em Statist. Probab. Lett.}, 94:230--235, 2014.

\bibitem{BFRS2016}
C.~Bayer, P.K. Friz, S.~Riedel, and J.G.M. Schoenmakers.
\newblock From rough path estimates to multilevel {Monte Carlo}.
\newblock {\em SIAM Journal on Numerical Analysis}, 54(3):1449--1483, 2016.

\bibitem{BHOZ2008}
F.~Biagini, Y.~Hu, B.~{\O}ksendal, and T.~Zhang.
\newblock {\em Stochastic Calculus for Fractional Brownian Motion and
  Applications}.
\newblock Springer-Verlag London, 2008.

\bibitem{CHL2017}
Y.~Cao, J~Hong, and Z.~Liu.
\newblock Approximating stochastic evolution equations with additive white and
  rough noises.
\newblock {\em SIAM Journal on Numerical Analysis}, 55(4):1958--1981, 2017.

\bibitem{CZ2000}
L.~Coutin and Z.~Qian.
\newblock Stochastic differential equations for fractional {Brownian} motions.
\newblock {\em Comptes Rendus de l'Académie des Sciences - Series I -
  Mathematics}, 331(1):75--80, 2000.

\bibitem{FR2014}
P.K. Friz and S.~Riedel.
\newblock Convergence rates for the full {Gaussian} rough paths.
\newblock {\em Annales De L Institut Henri Poincare-probabilites Et
  Statistiques}, 50(1):154--194, 2014.

\bibitem{GS2011}
M.~Garrido-Atienza and B.~Schmalfuss.
\newblock Ergodicity of the infinite dimensional fractional {Brownian} motion.
\newblock {\em Journal of Dynamics and Differential Equations}, 23(3):671--681,
  2011.

\bibitem{GRR1970}
A.~Garsia, E.~Rodemich, and H.~Rumsey.
\newblock A real variable lemma and the continuity of paths of some {Gaussian}
  processes.
\newblock {\em Indiana University Mathematics Journal}, 20(6):565--578, 1971.

\bibitem{GLW2019}
A.~Gu, K.~Lu, and B.~Wang.
\newblock Asymptotic behavior of random navier-stokes equations driven by
  {Wong--Zakai} approximations.
\newblock {\em Discrete and Continuous Dynamical Systems- Series A},
  39(1):185--218, 2019.

\bibitem{HN2009}
Y.~Hu and D.~Nualart.
\newblock Rough path analysis via fractional calculus.
\newblock {\em Transactions of the American Mathematical Society},
  361(5):2689--2718, 2009.

\bibitem{INY1977}
N.~Ikeda, S.~Nakao, and Y.~Yamato.
\newblock A class of approximations of {Brownian} motion.
\newblock {\em Publications of The Research Institute for Mathematical
  Sciences}, 13(1):285--300, 1977.

\bibitem{IW1992}
N.~Ikeda and S.~Watanabe.
\newblock {\em Stochastic Differential Equations and Diffusion Processes}.
\newblock North Holland, 1992.

\bibitem{KM2016}
D.T. Kelly and I.~Melbourne.
\newblock Smooth approximation of stochastic differential equations.
\newblock {\em Annals of Probability}, 44(1):479--520, 2016.

\bibitem{Konecny1983}
F.~Konecny.
\newblock On wong--zakai approximation of stochastic differential equations.
\newblock {\em Journal of Multivariate Analysis}, 13(4):605--611, 1983.

\bibitem{KP1991}
T.~G. Kurtz and P.~Protter.
\newblock Weak limit theorems for stochastic integrals and stochastic
  differential equations.
\newblock {\em Annals of Probability}, 19(3):1035--1070, 1991.

\bibitem{LW2017}
K.~Lu and B.~Wang.
\newblock {Wong–Zakai} approximations and long term behavior of stochastic
  partial differential equations.
\newblock {\em Journal of Dynamics and Differential Equations}, 2017.

\bibitem{LW2010}
K.~Lu and Q.~Wang.
\newblock Chaos in differential equations driven by a nonautonomous force.
\newblock {\em Nonlinearity}, 23(11):2935--2975, 2010.

\bibitem{LW2011}
K.~Lu and Q.~Wang.
\newblock Chaotic behavior in differential equations driven by a {Brownian}
  motion.
\newblock {\em Journal of Differential Equations}, 251(10):2853--2895, 2011.

\bibitem{maejima-tudor}
M.~Maejima and C.A. Tudor.
\newblock Wiener integrals with respect to the {H}ermite process and a
  non-central limit theorem.
\newblock {\em Stochastic Analysis and Applications}, 25(5):1043--1056, 2007.

\bibitem{McShane1972}
E.J. McShane.
\newblock Stochastic differential equations and models of random processes.
\newblock In {\em Proceedings of the Sixth Berkeley Symposium on Mathematical
  Statistics and Probability}, volume~3, pages 263--294. University of
  California Press, Berkeley, 1972.

\bibitem{Mishura2008}
Y.~S. Mishura.
\newblock {\em Stochastic Calculus for Fractional Brownian Motion and Related
  Processes}.
\newblock Springer-Verlag Berlin Heidelberg, 2008.

\bibitem{Naganuma2016}
N.~Naganuma.
\newblock Exact convergence rate of the {Wong--Zakai} approximation to {RDEs}
  driven by {Gaussian} rough paths.
\newblock {\em Stochastic}, 88(7):1041--1059, 2016.

\bibitem{Nakao1986}
S.~Nakao.
\newblock On weak convergence of sequences of continuous local martingales.
\newblock {\em Annales De L Institut Henri Poincare-probabilites Et
  Statistiques}, 22(3):371--380, 1986.

\bibitem{NR2002}
D.~Nualart and A.~R\u{a}\c{s}canu.
\newblock Differential equations driven by fractional {Brownian} motion.
\newblock {\em Collectanea mathematica}, 53(1):55--82, 2002.

\bibitem{Protter1985}
P.~Protter.
\newblock Approximations of solutions of stochastic differential equations
  driven by semimartingales.
\newblock {\em Annals of Probability}, 13(3):716--743, 1985.

\bibitem{SKM1993}
S.~G. Samko, A.~A. Kilbas, and O.~I. Marichev.
\newblock {\em Fractional Integrals and Derivatives: Theory and Applications}.
\newblock Gordon and Breach Science Publishers, 1993.

\bibitem{ST1994}
G.~Samorodnitsky and M.~Taqqu.
\newblock {\em Stable Non-Gaussian Random Processes: Stochastic Models with
  Infinite Variance}.
\newblock Chapman and Hall/CRC, 1994.

\bibitem{SL2017}
J.~Shen and K.~Lu.
\newblock {Wong–Zakai} approximations and center manifolds of stochastic
  differential equations.
\newblock {\em Journal of Differential Equations}, 263(8):4929--4977, 2017.

\bibitem{SZLW2019}
J.~Shen, J.~Zhao, K.~Lu, and B.~Wang.
\newblock The {Wong--Zakai} approximations of invariant manifolds and
  foliations for stochastic evolution equations.
\newblock {\em Journal of Differential Equations}, 266(8):4568--4623, 2019.

\bibitem{SV1972}
D.W. Stroock and S.R.S. Varadhan.
\newblock On the support of diffusion processes with applications to the strong
  maximum principle.
\newblock In {\em Proceedings of the Sixth Berkeley Symposium on Mathematical
  Statistics and Probability}, volume~3, pages 333--359. University of
  California Press, Berkeley, 1972.

\bibitem{Tudor2009}
C.~Tudor.
\newblock {Wong--Zakai} approximations for stochastic differential equations
  driven by a fractional {Brownian} motion.
\newblock {\em Journal for Analysis and its Applications}, 28:165--182, 2009.

\bibitem{WLW2018}
X.~Wang, K.~Lu, and B.~Wang.
\newblock {Wong--Zakai} approximations and attractors for stochastic
  reaction--diffusion equations on unbounded domains.
\newblock {\em Journal of Differential Equations}, 264(1):378--424, 2018.

\bibitem{WZ1965b}
E.~Wong and M.~Zakai.
\newblock On the convergence of ordinary integrals to stochastic integrals.
\newblock {\em Annals of Mathematical Statistics}, 36(5):1560--1564, 1965.

\bibitem{WZ1965a}
E.~Wong and M.~Zakai.
\newblock On the relation between ordinary and stochastic differential
  equations.
\newblock {\em International Journal of Engineering Science}, 3(2):213--229,
  1965.

\end{thebibliography}

\end{document}